\newcommand{\E}{\mathbf{E}}
\def\P{\mathbf{P}}
\def\to{\rightarrow}
\def\mb{\mbox}
\def\l{\left}
\def\r{\right}
\def\<{\langle}
\def\>{\rangle}
\newcommand{\ba}{\[\begin{aligned}}
\newcommand{\ea}{\end{aligned}\]}
\newcommand\mnote[1]{} 
\newcommand{\beq}[1]{\begin{equation}\label{#1}}
\newcommand\eeq{\end{equation}}
\newcommand\ben{\begin{equation}}
\newcommand\een{\end{equation}}
\newcommand\bes{\begin{eqnarray*}}
\newcommand\ees{\end{eqnarray*}}
\newcommand\besn{\begin{eqnarray}}
\newcommand\eesn{\end{eqnarray}}
\def\bthm{\begin{theorem}}
\def\ethm{\end{theorem}}
\def\bdefn{\begin{definition}}
\def\edefn{\end{definition}}
\newcommand{\benu}{\begin{enumerate}\setlength\itemsep{6pt}}
\newcommand{\beit}{\begin{itemize}\setlength\itemsep{3pt}}
\def\eenu{\end{enumerate}}
\def\eeit{\end{itemize}}
\def\beds{\begin{description}}
\def\eeds{\end{description}}
\def\bepr{\begin{problem}}
\def\eepr{\end{problem}}
\def\bprf{\begin{proof}}
\def\eprf{\end{proof}}
\def\berk{\begin{remark}}
\def\eerk{\end{remark}}
\def\bex{\begin{exercise}}
\def\eex{\end{exercise}}
\def\beg{\begin{example}}
\def\eeg{\end{example}}
\def\BB{{\mathcal B}}
\def\DD{{\mathcal D}}
\def\FF{{\mathcal F}}
\def \KK{{\mathcal K}}
\def\LL{{\mathbb L}}
\def\XX{{\mathcal X}}
\def\YY{{\mathcal Y}}
\def\N{\mathbb{N}}
\def\R{\mathbb{R}}
\def\S{\mathbb{S}} 
\newcommand{\sm}{{\raise0.3ex\hbox{$\scriptstyle \setminus$}}}
\def\alp{\alpha}
\renewcommand\phi{\varphi}
\theoremstyle{plain} 
    \newtheorem{theorem}{Theorem}
    \newtheorem{lemma}[theorem]{Lemma}
    \newtheorem{proposition}[theorem]{Proposition}
    \newtheorem{claim}[theorem]{Claim}
\theoremstyle{definition} 
    \newtheorem{definition}[theorem]{Definition}
    \newtheorem{exercise}[theorem]{Exercise}
    \newtheorem{problem}[theorem]{Problem}
        \newtheorem{remark}[theorem]{Remark}
    \newtheorem{example}[theorem]{Example}
\renewcommand\P{\mathbb P}
\renewcommand\E{\mathbb E}
\renewcommand{\bex}{\indent\begin{exercise}}
\renewcommand\subset{\subseteq}
\def\H{\mathbb H}
\begin{document}

\title{Partial divisibility of random sets and powers of completely monotone functions}
\author[J.~S.~Baslingker]{Jnaneshwar Baslingker}
\author[B.~Dan]{Biltu Dan}
\address{Department of Mathematics, Indian Institute of Science, Bangalore-560012, India.}
\email{jnaneshwarb@iisc.ac.in, biltudan@iisc.ac.in}
\date{\today}

\begin{abstract}
In this article, we study exponents which preserve complete monotonicity of functions on lattices. We prove that for any completely monotone function $f$ on a finite lattice, $f^\alpha$ is completely monotone for all $\alpha\ge c$, where $c$ is explicitly described. For finite distributive lattices we show that the bound $c$ is sharp. Important examples of completely monotone functions are void functionals of random closed sets. We prove that if $V_\XX$ is the void functional of a random subset $\XX$ of $[n]$, then $V_\XX^\alpha$ is void functional of some random closed set for $\alpha\ge n-1$. The results are analogous to the result of FitzGerald and Horn~\cite{FH77} on Hadamard powers of positive semi-definite matrices. Also, we study the question of approximating an $m$-divisible random set by infinitely divisible random sets, and its generalization to lattices.
\end{abstract}

\keywords{Random sets, Infinitely divisible random sets, Completely monotone functions, Fitzgerald-Horn, Void probabilities}
\subjclass[2010]{26A48,52A22,60E07}
\maketitle


\section{Introduction}


Completely monotone functions, which are Laplace transforms of positive measures, have been extensively studied. Completely monotone functions (c.m.\ functions) are an important class of functions with significant applications in various branches of mathematics. They find uses in potential theory~\cite{bergfrost}, probability theory~\cite{bondesson1992,feller1966,Kimberling}, physics~\cite{day1970}, numerical and asymptotic analysis~\cite{Frenzen,wimp1981}, and combinatorics, among other areas. The monograph by Widder~\cite{Widder} provides a comprehensive collection of important properties of c.m.\ functions.

A function $f: (0, \infty) \rightarrow [0,\infty)$ is said to be completely monotone if it has derivatives of all orders and satisfies the condition:
\begin{align*}\label{eq: c.m.\ continous defn}
\left(-\frac{d}{dx}\right)^n f(x) \geq 0 \quad \text{for all } x > 0 \text{ and } n = 0, 1, 2, \ldots
\end{align*}

Similarly, a sequence $a=\{a_n\}_{n\geq 0}$ is said to be completely monotone sequence (c.m.\ sequence) if 
\begin{align*}
((-D)^ka)_j \geq 0, \mbox{for all } k,j\geq 0  
\end{align*}
where for any sequence $b=\{b_n\}_{n\geq 0}$ we define $Db$ to be the sequence $\{b_{n+1}-b_n\}_{n\geq 0}$. A classical result in analysis asserts that c.m.\ sequences are nothing but moment sequences of finite positive measures on $[0,1]$ (see the Hausdorff moment sequence theorem, Proposition $6.11$ of Chapter~$4$ of~\cite{BCRR2012harmonic}).
One notable result in the theory of c.m.\ functions is Bernstein's theorem (see Theorem $6.13$ of Chapter~$4$ of~\cite{BCRR2012harmonic}), which characterizes c.m.\ functions as Laplace transforms of positive measures on $[0,\infty)$. In other words, $f$ is a c.m.\ function if and only if $f(t)=\int\limits_{[0,\infty)}\exp(-tx)d\mu(x)$ for some finite positive Borel measure $\mu$ on $[0,\infty)$. An important application of complete monotonicity in probability theory is in the study of void functionals of random closed sets. 

 \subsection{Void functionals of random closed sets}
 

 The origin of the modern concept of a random set goes as far back as the seminal book by A.N. Kolmogorov~\cite{kolmogorov}. Further progress in the theory was due to developments in the areas such as the study of random elements in general topological spaces, groups, and semigroups~\cite{grenander2008}, the general theory of stochastic processes~\cite{dellacherie}, point processes~\cite{kallenberg2017random}, potential theory~\cite{choquet}, advances in image analysis and microscopy~\cite{serra}. We refer the reader to the monographs by Matheron~\cite{matheron1974} and Molchanov~\cite{molchanov2017} for a comprehensive study on random sets.

 Let $E$ be a locally compact, Hausdorff, second countable topological space. Let $\FF,\KK$ be the collections of closed, compact subsets of $E$ respectively.   

Random sets in this article will always mean random closed sets. A \textit{random closed set} in $E$ is a random variable $\XX$ (on some probability space $(\Omega, \mathfrak{F},\mathbb P)$) taking values in $\FF$ and measurable w.r.t.\ $\BB(\FF)$.  Here $\BB(\FF)$ is the Borel sigma-algebra, defined with respect to the Fell topology on $\FF$ (see~\cite{matheron1974, molchanov2017}). We mention here that if $E$ is a compact metric space, then the Hausdorff metric metrizes the Fell topology on $\FF$.

The distribution of a random closed set $\XX$ is determined by its {\em void functional} $V_{\XX}:\KK\mapsto [0,1]$ defined as
\begin{equation*}
\label{eq:capacityfnl}
V_{\XX}(K):=\P\{\XX\cap K =\emptyset\}.
\end{equation*}
The {\em capacity functional} is defined as $T_{\XX}:=1-V_{\XX}$, i.e., $T_{\XX}(K):=\P\{\XX\cap K\neq\emptyset\}$.

In this article, we study the positive powers of void functionals which continue to be void functionals. More precisely, we ask the following natural question. Let $V_{\XX}$ be the void functional of $\XX$. For which $\alp>0$ is $V_{\XX}^{\alpha}$ a void functional? 
%
If $V_{\YY}(K)=V_{\XX}(K)^{\alpha}$ for all $K\in\KK$, we denote $\YY$ as $\XX_{\alpha}$. This question is motivated by the following result of Lawler, Schramm and Werner (see Section $2.3$ of~\cite{werner}).  Let $\XX$ be the image of the Brownian motion in $\H$ (upper half plane) started at $0$ and conditioned to exit $\H$ at $\infty$. A {\em hull} is a set $A$ such that $A=\overline{A\cap\H}$ and $0,\infty\not\in A$ and $\H \setminus A$ is simply connected. For a hull $A$, there is a unique conformal map $\Phi_A$ from $\H\setminus A$ onto $\H$ that fixes $0$ and $\infty$ and such that $\Phi_A(z)\sim z$ as $z\to \infty$. \citet{virag} showed that $V_{\XX}(A)=\Phi_A'(0)$ when $A$ is a  hull. Then $\XX_{\alpha}$ (if it exists) must satisfy $V_{\XX_{\alpha}}(A)=\Phi_A'(0)^{\alpha}$ for all hulls $A$. Lawler, Schramm and Werner showed that a random set $\XX_{\alpha}$ exists if and only if $\alpha\ge \frac58$. Note that they did not consider all compact sets in $\overline{\H}$ but only hulls.

First we list some properties of the void functional. But before that we need to introduce some notions. Let $\mathcal{U}$ be a collection of subsets of $E$  that is closed under finite unions. For any functional $\varphi:\mathcal{U} \to \mathbb{R}$ and $A\in \mathcal{U}$, 
we define 
$\Delta_A\varphi:\mathcal{U}\rightarrow \R $ by $\Delta_A\varphi(B)=\varphi(B)-\varphi(A\cup B)$, which is a form of discrete derivative. For $A,A_1,\ldots ,A_n\in \mathcal{U}$, one can verify inductively that
%

%
\ba
\Delta_{A_n}\ldots \Delta_{A_1}\varphi(A)=\sum_{J\subseteq [n]}(-1)^{|J|}\varphi\l(A \cup \bigcup_{i\in J}A_i\r).
\ea
A functional $\varphi:\mathcal{U} \to \R$ is said to be {\em  completely monotone} if it is non-negative and $\Delta_{A_n}\ldots \Delta_{A_1}\varphi(A) \geq 0$ for all $n \geq 1$ and all $A,A_1,\ldots,A_n \in \mathcal{U}$. It is said to be {\em completely alternating} if  $\Delta_{A_n}\ldots \Delta_{A_1}\varphi(A) \leq 0$ for all $n \geq 1$ and all $A,A_1,\ldots,A_n \in \mathcal{U}$.
Note that the definition is analogous to that of continuous case. Also note that if $\varphi$ is completely monotone, then $\varphi$ is monotone, that is, $\varphi(A)\geq \varphi(B)$ if $A\subset B$. 
The void functional of a random closed set $\XX$ satisfies the following properties (see .
\benu
\item $V_{\XX}(\emptyset) = 1.$
\item (l.s.c.\ i.e., lower semi-continuity\footnote{This can be shown to be equivalent to the usual formulation of lower semi-continuity via $\liminf$.}) If $K_n\downarrow K$, then $V_{\XX}(K_n) \uparrow V_{\XX}(K)$.
\item $V_{\XX}$ is c.m.\ on $\KK$. 
\eenu
G. Choquet proved that any functional satisfying above properties is void functional of some random set (see Theorem $1.1.29$~\cite{molchanov2017}). Thus, a functional $V:\KK\mapsto [0,1]$ satisfying $V_{\XX}(\emptyset) = 1$ is the void functional of a random set if and only if it is l.s.c.\ and c.m.


 Since lower semi-continuity of $V_{\XX}^{\alpha}$ follows from that of $V_{\XX}$, the question of existence of $\XX_{\alpha}$ is really a question of complete monotonicity of $V_{\XX}^{\alpha}$. For $\alpha\in \N$, $\XX_{\alpha}$ does exist, and it is just the union of $\alpha$ i.i.d.\ copies of $\XX$. For fractional $\alpha$, it is far from obvious that $\XX_{\alpha}$ exists. Below we give two examples. In the first example, any $\alpha>0$ works, where as in the second example, $\XX_{\alpha}$ does not exist if $\alpha < 1$. We remark that the question of existence of $\XX_{\alpha}$ for any $\alpha>0$ has not been studied but the particular case of existence of $\XX_{1/m}$ for all $m\in\N$ has been studied. If $\XX_{1/m}$ exists for some $m\in\N$ then $\XX$ is called $m$-divisible and if $\XX_{1/m}$ exists for all $m\in\N$, then $\XX$ is called infinitely divisible (for more on infinitely divisible random sets see Chapter $4$ of~\cite{molchanov2017}). In view of this the existence of $\XX_\alp$ is referred to as partial divisibility of $\XX$.

 \begin{example}
      Let $\XX$ be defined as the Poisson point process in $\mathbb{R}^d$ with intensity measure $\lambda(\cdot)$. Then $V_\XX(K)=e^{-\lambda(K)}$. If $\YY$ denotes the Poisson point process with intensity $\alp \lambda(\cdot)$, then $V_{\YY}(K)=V_\XX(K)^{\alpha}$.
 \end{example}
 \begin{example}
     Let $E=\{1,2\}$. Define $\P(\XX=\{1\})=p$ and $\P(\XX=\{2\})=1-p$, where $0<p<1$. Then $\Delta_{\{1\}}\Delta_{\{2\}}V_{\XX}^{\alpha}(\emptyset)=1-p^\alpha-(1-p)^\alpha$ and this can be seen to be negative for $\alpha<1$. So, $\XX_\alpha$ does not exist for $0<\alpha<1$. For this example, $\XX_\alpha$ exists for $\alpha\geq 1$.
 \end{example}

Powers of c.m.\ functions also appear in the study of infinitely divisible random sets under the operation of union. 

\subsection{Infinite divisibility}
A random variable $X$ taking values in $\mathbb{R}$ is said to be $m$-divisible if there exist i.i.d.\ random variables \(X_{m,1}, X_{m,2}, \ldots, X_{m,m}\) such that their sum \(X_{m,1} + X_{m,2} + \ldots + X_{m,m}\) has the same distribution as \(X\). It is said to be infinitely divisible if it is $m$-divisible for every positive integer \(m\). Let $\DD_{m}$ denote the set of $m$-divisible distributions and $\DD_{\infty}$ denote the set of infinitely divisible distributions on $\R$. Given two cumulative distribution functions \(F(x)\) and \(G(x)\) on $\mathbb{R}$, the Kolmogorov distance, denoted by \(\rho\), is calculated as:
\[\rho(F,G) := \sup_{x\in\mathbb{R}} |F(x) - G(x)|.\]

An important and beautiful result in the theory of infinitely divisible distributions is that 
\begin{align}\label{eq: arak ineq}
    C_1m^{-2/3}\leq \quad\sup_{F\in\DD_m}\inf_{G\in\DD_{\infty}} \rho(F,G)\quad\leq C_2m^{-2/3},
\end{align}
due to the brilliant work of \citet{arak1981,arak1982}, following a series of works by Kolmogorov, Prohorov, Mesalkin, Le Cam and others (see the review article~\cite{BoseDasgupta}). 

Motivated by this, we study a similar question for infinitely divisible random sets. The problem of approximating $m$-divisible random sets by infinitely divisible random sets, in turn has to do with approximating $m$-divisible c.m.\ functions by infinitely divisible c.m.\ functions (defined in the next sub-section). 

We study both of the problems, the powers of c.m.\ functions and approximating m-divisible c.m.\ functions by infinitely divisible c.m.\ functions, in the general setting of lattices.

\subsection{General set-up of lattices} \label{subsec: gen set up of lattices} Let $(\LL, \leq)$ be a partially ordered set. $\LL$ is said to be a \textit{lattice} if, for any two elements $a, b \in \LL$, there exist unique elements $x = a \wedge b$ (the meet or infimum) and $y = a \vee b$ (the join or supremum) such that:

1. $x \leq a$ and $x \leq b$,

2. $a \leq y$ and $b \leq y$,

3. For any lower bound $\ell$ satisfying $\ell \leq a$ and $\ell \leq b$, we have $\ell \leq x$.

4. For any upper bound $u$ satisfying $a \leq u$ and $b \leq u$, we have $y \leq u$.

 A lattice $(\LL, \leq)$ is said to be \textit{distributive} if for all $a,b,c\in\LL$,
\[
a \lor (b \land c) = (a \lor b) \land (a \lor c).
\]

    
   For more on lattices we refer the reader to Chapter 3 of~\cite{stanley2011enumerative}. We can define c.m.\ function on any lattice $\LL$. For any function $f:\LL \to \mathbb{R}_{\geq 0}$ and $x,y\in \LL$, 
define 
$\Delta_xf(y):= f(y)-f(x\vee y)$, a form of discrete derivative. Successively, if $x,x_1,\ldots ,x_n\in \LL$, then
\begin{equation*}
\label{def:discderivative}
\Delta_{x_n}\ldots \Delta_{x_1}f(x) := \Delta_{x_{n}}\ldots \Delta_{x_2}f(x) - \Delta_{x_{n}}\ldots \Delta_{x_2}f(x \vee x_1).
\end{equation*}
One can verify inductively that
\ba
\Delta_{x_n}\ldots \Delta_{x_1}f(x)=\sum_{J\subseteq [n]}(-1)^{|J|}f\l(x \vee \l(\vee_{i\in J}x_i\r)\r).
\ea
A function $f:\LL \to \mathbb{R}_{\geq 0}$ is said to be {\em  completely monotone} if  $\Delta_{x_n}\ldots \Delta_{x_1}f(x) \geq 0$ for all $n \geq 1$ and all $x,x_1,\ldots,x_n \in \LL$. As closed sets form a lattice (union and intersection are the join and meet operations), the definition of void functionals in the case of random sets is a special case of the above definition. More generally, c.m.\ functions have also been studied in the setting of semigroups (see Chapter $4$ of~\cite{BCRR2012harmonic}).

 We ask the following \textbf{question}: if $f$ is a c.m.\ function on a lattice, then for which $\alpha>0$ is $f^\alpha$ c.m.? We answer this question for finite lattices (see Theorem~\ref{thm:cm_lattice}). As void functional of a random subset of $[n]$ is c.m.\ on the lattice of subsets of $[n]$, the above question is a generalisation of our earlier question on powers of void functional being void functional. Note that if $f,g$ are c.m.\ on $\LL$, one can check that the product $fg$ is also c.m.\ It follows that if $f$ is c.m.\ then $f^{\alp}$ is c.m.\ for $\alp\in \N$, like in the case of c.m.\ functions on $(0,\infty)$.
In Theorem~\ref{thm: TJ trick}, we give an example of c.m.\ function $f$ on $[0,\infty)$ for which $f^\alpha$ is c.m.\ only when $\alpha\in\N$. But if we consider the setting of finite lattices, then there are no such c.m.\ functions (see Theorem~\ref{thm:cm_lattice}).

Now we state the second question precisely. A random set $\XX$ is said to be $m$-divisible for union if there exist i.i.d.\ random sets $\XX_{m,1}, \XX_{m,2}, \ldots, \XX_{m,m}$ such that
\[\XX \stackrel{d}{=} \XX_{m,1} \cup \XX_{m,2} \cup \ldots \cup \XX_{m,m}.\] It is said to be infinitely divisible, if it is $m$-divisible for every \(m \geq 1\)  (see Chapter $4$ of~\cite{molchanov2017}). Infinitely divisible random sets can be characterised in terms of void functionals, which are of the form $e^{-(1-\psi)}$, where $\psi$ is roughly another void functional  (see Theorem $3\mbox{-}1\mbox{-}1$ of~\cite{matheron1974}). Also one can show that $\XX$ is an infinitely divisible random set if and only if $V_{\XX}^{1/m}$ is void functional of some random set, for each $m\geq 1$.

We define a c.m.\ function $f$ on lattice $\mathbb{L}$ to be $m$-divisible, $m\in \mathbb{N}$, if $f^{1/m}$ is c.m.\ We define a c.m.\ function $f$ to be infinitely divisible, if $f^{1/m}$ is c.m.\ for every $m\ge 1$.
Let $\mathcal F_m$ be the set of all $m$-divisible c.m.\ functions on a lattice $\mathbb{L}$ taking values in $[0,1]$ and $\mathcal F_{\infty}$ be the set of all infinitely divisible c.m.\ functions. 
Let 
\begin{align}\label{def: tau_n}
    \tau_m:=\sup\limits_{f\in \mathcal F_m}\inf\limits_{g\in \mathcal F_{\infty}} d(f,g),
\end{align}
where $$d(f,g):=\sup\limits_{x\in \mathbb{L}}|f(x)-g(x)|.$$

We ask the following \textbf{question}: does $\tau_m\rightarrow 0$, as $m\rightarrow\infty$? If yes, what is the exact rate at which $\tau_m \rightarrow 0$? We answer this question for any lattice which is not a chain (see Theorem~\ref{thm:cm_approx_lattice}). Note that in the case of lattice being a chain, any non-increasing, non-negative function is c.m.\ function and as a result, is infinitely divisible. We now present the main results of the article.

\section{Main results}
We present our main results in the following three subsections. In the first subsection we present our results on random sets. We then generalize them to the setting of lattices in the second subsection. In the third subsection, we present a few results which serve as examples related to the results of the first two subsections.

\subsection{Results on random sets:}

We first answer the question of existence of $\XX_{\alpha}.$ Recall that $\XX_{\alpha}$ is the random set with void functional $V_{\XX}^{\alpha}.$  We stick to the finite setting $E=[n]:=\{1,\ldots, n\}$, $n\in\N$.

\begin{theorem}\label{thm:alp_finitecase} If $\XX$ is a random subset of $[n]$, then $\XX_{\alpha}$ exists for any $\alpha\ge n-1$. If $\XX$ is the uniform random singleton set, then $\XX_{\alpha}$ does not exist for non-integer $\alpha<n-1$.
\end{theorem}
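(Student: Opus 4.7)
The plan is to split the theorem into its two halves and work on the Boolean lattice $\LL = 2^{[n]}$ ordered by inclusion, which is the lattice on which $V_{\XX}$ lives. Since $V_{\XX}^\alpha(\emptyset)=1$ and lower semi-continuity is automatic in the finite setting, Choquet's theorem reduces the existence of $\XX_\alpha$ to the complete monotonicity of $V_{\XX}^\alpha$ on $\LL$.

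For existence when $\alpha \ge n-1$, I would invoke Theorem~\ref{thm:cm_lattice} applied to $\LL = 2^{[n]}$. The void functional $V_{\XX}$ is c.m.\ on this lattice by the properties listed in the introduction, and the crux of this half is to unwind the explicit constant $c$ from Theorem~\ref{thm:cm_lattice} when specialized to the Boolean lattice of $[n]$ and verify that it equals $n-1$. This delivers $V_{\XX}^\alpha$ c.m.\ for every $\alpha \ge n-1$, so $\XX_\alpha$ exists.

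For sharpness, let $\XX$ be the uniform singleton in $[n]$, so $V_{\XX}(K) = 1 - |K|/n$ and $V_{\XX}^\alpha(K) = g(|K|)$ where $g(x) := (1 - x/n)^\alpha$. Fix $2 \le k \le n$, take $A = \emptyset$ and $A_i = \{i\}$ for $i \in [k]$, and apply the discrete derivative formula from the introduction:
\[
\Delta_{A_k} \cdots \Delta_{A_1} V_{\XX}^\alpha(\emptyset) = \sum_{j=0}^{k} (-1)^j \binom{k}{j} g(j) = (-1)^k \Delta^k g(0),
\]
where $\Delta$ denotes the forward difference with unit step. The mean value theorem for finite differences gives $\Delta^k g(0) = g^{(k)}(\xi)$ for some $\xi \in (0, k)$, and a direct computation using $g^{(k)}(x) = (-1/n)^k \alpha(\alpha-1)\cdots(\alpha-k+1)(1-x/n)^{\alpha-k}$ yields
\[
\Delta_{A_k}\cdots\Delta_{A_1} V_{\XX}^\alpha(\emptyset) = \frac{\alpha(\alpha-1)\cdots(\alpha-k+1)}{n^k}(1-\xi/n)^{\alpha-k}.
\]
Since $(1 - \xi/n)^{\alpha - k} > 0$ for $\xi < n$, the sign of this discrete derivative matches that of the falling factorial $\alpha(\alpha-1)\cdots(\alpha-k+1)$.

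Given a non-integer $\alpha \in (0, n-1)$, set $m := \lfloor \alpha \rfloor \in \{0, \ldots, n-2\}$ and $k := m + 2 \in \{2, \ldots, n\}$. The factors $\alpha, \alpha-1, \ldots, \alpha - m$ are strictly positive while the final factor $\alpha - (m+1) < 0$, so the falling factorial is strictly negative. This produces a violation of complete monotonicity, so $V_{\XX}^\alpha$ is not c.m.\ and $\XX_\alpha$ does not exist. The main obstacle is the first step, namely pinning down the constant $c$ in Theorem~\ref{thm:cm_lattice} for the Boolean lattice as exactly $n-1$; the sharpness computation is a short consequence of the falling factorial sign change once one picks the right test configuration of singletons.
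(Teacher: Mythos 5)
Your argument for the sharpness half is correct and, in fact, more elementary than what the paper does. The paper derives the non-existence of $\XX_\alpha$ for the uniform singleton from the stronger Theorem~\ref{thm:oneinterval} (non-existence for an arbitrary distribution on singletons), whose proof rests on a Schur-convexity argument spanning Lemmas~\ref{lem:schurconvexity}--\ref{lem:2singleton}. Your finite-difference mean value theorem computation handles the uniform case directly with a one-line sign analysis of the falling factorial $\alpha(\alpha-1)\cdots(\alpha-k+1)$. One small point to spell out: for $\alpha\in(n-2,n-1)$ you take $k=n$, and then $g^{(n)}(x)$ carries a factor $(1-x/n)^{\alpha-n}$ that blows up as $x\to n$, so $g\notin C^{n}[0,n]$; the mean value theorem still applies because $g$ is continuous on $[0,n]$ and $n$-times differentiable on the open interval $(0,n)$, which is enough for the iterated-Rolle proof and places $\xi$ strictly inside $(0,n)$, but this endpoint issue should not be passed over silently.

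The existence half, however, has a genuine gap: you invoke Theorem~\ref{thm:cm_lattice} to deduce Theorem~\ref{thm:alp_finitecase}, but in the paper the logical dependence runs the other way. Part~(1) of Theorem~\ref{thm:cm_lattice} is proved by using Proposition~\ref{prop:p-g} to reduce the claim about a general finite lattice to a claim about a Boolean sub-lattice, and at that point the paper appeals directly to Theorem~\ref{thm:alp_finitecase} to conclude. So your appeal to Theorem~\ref{thm:cm_lattice} is circular: Theorem~\ref{thm:alp_finitecase} is the engine that drives the lattice theorem, not its corollary. You still need an independent argument for $2^{[n]}$. The paper's actual argument is an induction on $n$. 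Writing $m$ for the element being removed, one decomposes
\[
V^{\alpha}(B)= V^{\alpha}(\{m\})\,U(B) + \bigl(1-V^{\alpha}(\{m\})\bigr)\,W(B),
\quad
U(B)=\frac{V^{\alpha}(B\cup\{m\})}{V^{\alpha}(\{m\})},\quad
W(B)=\frac{V^{\alpha}(B)-V^{\alpha}(B\cup\{m\})}{1-V^{\alpha}(\{m\})},
\]
shows $U$ is c.m.\ by applying the induction hypothesis to the c.m.\ functional $B\mapsto V(B\cup\{m\})/V(\{m\})$ on $2^{[m-1]}$, and shows $W$ is c.m.\ via the FitzGerald--Horn identity $a^{\beta}-b^{\beta}=\beta\int_0^1(a-b)\bigl(ta+(1-t)b\bigr)^{\beta-1}\,dt$, closure of c.m.\ functionals under products, and the induction hypothesis applied with exponent $\alpha-1\ge n-2$. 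That decomposition, not a specialization of Theorem~\ref{thm:cm_lattice}, is the missing idea in your proposal.
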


Theorem ~\ref{thm:alp_finitecase} (also Theorem~\ref{thm:cm_lattice}) is similar in nature to Theorem 2.2 of~\citet{FH77} on Hadamard powers of positive semi-definite (p.s.d.) matrices. They show that $\alpha$-th Hadamard power of  any $n\times n$ p.s.d.\ matrix with non negative entries is p.s.d.\ if $\alpha$ is an integer or $\alpha\geq n-2$. They also show that the bound $n-2$ is sharp. This result can be seen in parallel with Theorem ~\ref{thm:alp_finitecase}, where the sharp lower bound is $n-1$ instead of $n-2$. 

Next we present our result on approximating $m$-divisible random sets by infinitely divisible random sets.
Let $E$ be any locally compact, Hausdorff, second countable topological space. Let $\DD_m$ be the set of all $m$-divisible random sets in $E$ and $\DD_{\infty}$ be the set of all infinitely divisible random sets. 
Let $$\psi_m:=\sup\limits_{\XX\in \DD_m}\inf\limits_{\YY\in \DD_{\infty}} d_V(\XX,\YY),$$ where $$d_V(\XX,\YY):=\sup\limits_{K\in \KK}|V_\XX(K)-V_\YY(K)|.$$
Note that $\psi_{m}=0$ if $|E|=1$. So we consider $|E|>1$.
\begin{theorem}\label{thm:dist_void}
Let $E$ be any locally compact, Hausdorff, second countable topological space with $|E|>1$. Then there exists $0<c_1,c_2<\infty$ such that for any $m\geq 1$,
\begin{align}\label{eq:n_div_approx}
\frac{c_2}{m}\leq  \psi_m\leq \frac{c_1}{m}.
\end{align}
\end{theorem}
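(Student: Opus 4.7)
I would prove the two bounds separately, using an explicit Poissonisation for the upper bound and a tuned two-point example for the lower bound. For the upper bound, given $\XX \in \DD_m$ with decomposition $\XX \stackrel{d}{=} \XX_{m,1} \cup \cdots \cup \XX_{m,m}$ into i.i.d.\ copies, I set $f := V_{\XX_{m,1}}$ (so $V_\XX = f^m$) and construct $\YY$ as the union of the points of a Poisson point process on the space of closed subsets of $E$ with intensity $\mu := m \cdot \mathrm{Law}(\XX_{m,1})$. Since $\mu$ is a finite measure, $\YY$ is a.s.\ a finite union of closed sets and hence a random closed set; it is infinitely divisible because scaling the intensity by $1/k$ produces the required $k$-fold decomposition. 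Conditioning on the Poisson count $N \sim \Poi(m)$ and using independence of the sampled sets gives $V_\YY(K) = \E[f(K)^N] = \exp(-m(1-f(K)))$, so the upper bound reduces to the scalar inequality
\[
\sup_{u \in [0,1]} \bigl| (1-u)^m - e^{-mu} \bigr| \leq \frac{c_1}{m}.
\]
I would prove this by splitting at $u = 1/2$: writing $(1-u)^m = e^{-mu} e^{-g(u)}$ with $g(u) := m(-u - \log(1-u)) \in [0, mu^2]$ on $[0, 1/2]$ and using $1 - e^{-g} \leq g$ gives $e^{-mu} - (1-u)^m \leq m u^2 e^{-mu} \leq 4/(e^2 m)$, while on $[1/2, 1]$ both terms are at most $e^{-m/2}$, and $m e^{-m/2}$ is bounded.

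For the lower bound, use $|E| > 1$ to pick distinct points $e_1, e_2 \in E$, set $\epsilon := 1/m$, and let $\XX_{m,1}$ be the random subset of $\{e_1, e_2\}$ with $\P(\XX_{m,1} = \{e_j\}) = \epsilon$ for $j = 1, 2$ and $\P(\XX_{m,1} = \emptyset) = 1 - 2\epsilon$. Take $\XX$ to be the union of $m$ i.i.d.\ copies of $\XX_{m,1}$, so $\XX \in \DD_m$ by construction and
\[
V_\XX(\{e_j\}) = (1-\epsilon)^m, \qquad V_\XX(\{e_1, e_2\}) = (1-2\epsilon)^m.
\]
For any $\YY \in \DD_\infty$, set $a_j := -\log V_\YY(\{e_j\})$ and $c := -\log V_\YY(\{e_1, e_2\})$. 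Since $V_\YY^{1/k}$ is completely monotone for every $k \geq 1$, expanding the c.m.\ inequality $1 - e^{-a_1/k} - e^{-a_2/k} + e^{-c/k} \geq 0$ to leading order in $1/k$ as $k \to \infty$ yields the rigidity constraint $c \leq a_1 + a_2$. If $\eta := d_V(\XX, \YY)$ is small compared to $(1-\epsilon)^m$, then $|e^{-a_j} - (1-\epsilon)^m| \leq \eta$ forces $a_j \leq -m\log(1-\epsilon) + O(\eta)$, hence $e^{-c} \geq e^{-a_1 - a_2} \geq (1-\epsilon)^{2m}(1 - O(\eta))$. Combining this with the Taylor asymptotic
\[
(1 - 1/m)^{2m} - (1 - 2/m)^m = \frac{1}{e^2 m} + O(1/m^2)
\]
yields $\eta \geq c_2/m$ for all sufficiently large $m$; for the finitely many small $m$, $\psi_m$ is individually positive (the same example already lies outside $\DD_\infty$) and can be absorbed into the constant.

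The main obstacle is the rigidity step in the lower bound: extracting the structural inequality $c \leq a_1 + a_2$ for any infinitely divisible random set in the ambient space $E$ (not merely for random subsets of $\{e_1, e_2\}$), and then tracking the perturbation $e^{-a_j} = (1-\epsilon)^m \pm \eta$ with enough uniformity in $m$ to preserve the $e^{-2}/m$ leading-order gap. The Poissonisation for the upper bound and the Taylor asymptotic above are routine once this rigidity is in place.
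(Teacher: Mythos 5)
Your proposal is correct and follows essentially the same route as the paper: the upper bound via the accompanying Poissonized union (the union of $N\sim\Poi(m)$ i.i.d.\ copies of one $m$-th piece, reducing to the scalar estimate $\sup_{t\in[0,1]}|t^m-e^{m(t-1)}|=O(1/m)$), and the lower bound via a two-point test set $\{e_1,e_2\}$ together with the necessary multiplicativity constraint $V_\YY(\{e_1,e_2\})\ge V_\YY(\{e_1\})V_\YY(\{e_2\})$ extracted from complete monotonicity of $V_\YY^{1/k}$ as $k\to\infty$. The only deviations are cosmetic (your singleton mass is $1/m$ rather than the paper's $1/(2m)$, and you bound the scalar supremum by splitting at $u=1/2$ instead of locating the exact critical point); the "main obstacle" you flag is handled correctly and is exactly the step the paper uses.
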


\begin{remark}
   The above result gives that the exact rate of decay is $m^{-1}$, unlike in the case of \eqref{eq: arak ineq} where the rate is $m^{-2/3}$. Note that we obtained the optimal order $O(1/m)$ for the upper bound by considering the accompanying infinitely divisible random set, that is, the union of Poisson many i.i.d.\ copies of $\XX$. On the contrary, in the case of infinitely divisible distributions, the accompanying law does not give the optimal order for the upper bound obtained in~\cite{arak1981}. It follows from the proof that \begin{align*}
    \frac{1}{4\sqrt{e}(2+\sqrt{e})}\leq\liminf_{m}m\psi_{m}\leq\limsup_{m}m\psi_{m}\leq \frac{2}{e^2}.
\end{align*}
\end{remark}

\subsection{Results in the setting on lattices:}

In this subsection we present results that are generalization of the results in the previous subsection. We answer the first question on powers of c.m.\ functions (see Section ~\ref{subsec: gen set up of lattices}) for finite lattices. For any finite lattice $\LL$ and for any $x\in\LL$, let $d_x$ denote the number of covering elements of $x$ which is defined as 
\[ d_x:=\left|\{y\in\LL: y>x, \nexists z\in\LL \text{ with } x < z < y\}\right|.\]
Define $d_{max}:=\max\{d_x: x\in\LL\}$. We now state the following result which is a generalization (due to Choquet's Theorem, see Theorem $1.1.29$~\cite{molchanov2017}) of Theorem ~\ref{thm:alp_finitecase}. 
\begin{theorem}\label{thm:cm_lattice}

\hspace{2mm}
\begin{enumerate}
\item Let $\LL$ be any finite lattice.  Then for any c.m.\ function $f$ on $\LL$, the function $f^\alp$ is c.m.\ if $\alp$ is an integer or $\alp\ge d_{max}-1$.

\item Let $\LL$ be any finite distributive lattice. Then there exists a c.m.\ function $f$ on $\LL$ such that $f^\alp$ is not c.m.\ for any non-integer $\alp<d_{{max}}-1$. In other words, the set of $\alpha$ for which $f^\alp$ is c.m.\ for any c.m.\ function $f$ on $\LL$ is $\N \cup [d_{{max}}-1, \infty)$.
\end{enumerate}
\end{theorem}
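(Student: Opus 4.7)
My plan is as follows. For Part (1), the integer case follows from a Leibniz-style identity for discrete derivatives on a lattice: the pointwise product of two c.m.\ functions is c.m., so $f^k$ is c.m.\ for every $k\in\N$. For arbitrary $\alpha\ge d_{\max}-1$, writing $\alpha=\beta+k$ with $\beta\in[d_{\max}-1,d_{\max})$ and $k\in\Z_{\ge 0}$ and using $f^\alpha=f^\beta\cdot f^k$ reduces matters to showing that $f^\beta$ is c.m.\ for every $\beta\in[d_{\max}-1,d_{\max})$. Unwinding the definition, one must prove
\[
\sum_{J\subseteq [n]}(-1)^{|J|}f(u_J)^\beta\ge 0,\qquad u_J:=x\vee\bigvee_{i\in J}x_i,
\]
for every $n\ge 1$ and all $x,x_1,\ldots,x_n\in\LL$.

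The next step is a normalization via the telescoping identity $\Delta_{x_i}g(y)=\Delta_zg(y)+\Delta_{x_i}g(y\vee z)$, valid for $y\le z\le x_i$. Iterating along chains from $x$ up to each $x_i$ expresses the discrete derivative as a non-negative combination of similar expressions in which every $x_i$ is a cover of a new base element $x'\in\LL$, and (by the idempotency $\Delta_y^2=\Delta_y$) one may assume the resulting covers are distinct, so $n\le d_{x'}\le d_{\max}$. Thus $\beta\ge d_{\max}-1\ge n-1$ and the whole problem reduces to the following \emph{core inequality}: if $y_1,\ldots,y_n$ are distinct covers of $x$ in $\LL$ and $\beta\in[n-1,n)$, then $\sum_{J\subseteq[n]}(-1)^{|J|}f(u_J)^\beta\ge 0$. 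For this I would attempt a FitzGerald--Horn-style argument using the integral representation $y^\beta=\frac{1}{\Gamma(n-\beta)}\int_0^\infty y^n e^{-ty}t^{n-\beta-1}\,dt$ to rewrite the sum as an integral against $\sum_J(-1)^{|J|}f(u_J)^n e^{-tf(u_J)}$ and exploiting the complete monotonicity of $f^n$. The main obstacle is this core inequality: the integrand is not pointwise non-negative already at $n=2$, so the positivity of the integral must be extracted by a more delicate grouping that uses the cover structure---this is where the bound $d_{\max}-1$ enters sharply.

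For Part (2), fix $x\in\LL$ with $d:=d_{\max}$ covers $y_1,\ldots,y_d$ and define $\phi:\LL\to 2^{[d]}$ by $\phi(z):=\{i:y_i\le z\}$. A distributivity argument, using the identity $y_i\wedge(z\vee z')=(y_i\wedge z)\vee(y_i\wedge z')$ together with the fact that $y_i\wedge z\in\{x,y_i\}$ whenever $z\ge x$ (since $y_i$ covers $x$), shows that $\phi$ is a join-homomorphism from $\LL$ to $2^{[d]}$ and that $\phi\bigl(\bigvee_{i\in J}y_i\bigr)=J$ for every $J\subseteq[d]$. Since $g(A):=1-|A|/d$ is the void functional of the uniform random singleton on $[d]$, it is c.m.\ on $2^{[d]}$, and so the pullback $f:=g\circ\phi$ is c.m.\ on $\LL$. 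Then
\[
\Delta_{y_d}\cdots\Delta_{y_1}f^\alpha(x)=\sum_{k=0}^d\binom{d}{k}(-1)^k\Big(1-\tfrac{k}{d}\Big)^\alpha,
\]
which, by the singleton case in Theorem~\ref{thm:alp_finitecase}, is strictly negative for every non-integer $\alpha<d-1$.
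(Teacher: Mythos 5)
Your integer case (products of c.m.\ functions are c.m.) agrees with the paper. For non-integer $\alpha\ge d_{\max}-1$ your reduction-to-covers via telescoping is plausible in spirit (the paper reaches the same reduction more cleanly through the M\"obius-inversion characterization, Proposition~\ref{prop:p-g}, whose forward direction identifies $p(x)$ with the discrete derivative of $f^\alpha$ along all covers of $x$). But the central step --- your ``core inequality'' --- is left unproved, and your sketch of an argument via the integral representation $y^\beta=\frac{1}{\Gamma(n-\beta)}\int_0^\infty y^n e^{-ty}\,t^{n-\beta-1}\,dt$ does not close it: you acknowledge the integrand is not pointwise nonnegative and offer no replacement. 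This is exactly the gap the paper fills by a different idea: once $x_1,\ldots,x_n$ are all the covers of $x$, the map $A\mapsto x\vee\bigvee_{i\in A}x_i$ is a join-homomorphism from $2^{[n]}$ to $\LL$, so the pullback $g(A):=f\bigl(x\vee\bigvee_{i\in A}x_i\bigr)$ is c.m.\ on $2^{[n]}$; one then invokes Theorem~\ref{thm:alp_finitecase} (proved separately by induction on $n$) with $n\le d_{\max}$. You cite Theorem~\ref{thm:alp_finitecase} in part (2) but not in part (1), which is the missing link.

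For part (2) there is a concrete error: $\phi(z)=\{i:y_i\le z\}$ is \emph{not} a join-homomorphism on all of $\LL$, and $f=g\circ\phi$ need not be c.m. Take $\LL$ to be the divisor lattice of $12$, $x=2$, $y_1=4$, $y_2=6$. Then $\phi(3)=\emptyset$, $\phi(4)=\{1\}$, but $3\vee 4=12$ with $\phi(12)=\{1,2\}\ne\phi(3)\cup\phi(4)$, and indeed
\[
\Delta_{3}\Delta_{4}f(1)=f(1)-f(3)-f(4)+f(12)=1-1-\tfrac12+0=-\tfrac12<0,
\]
so $f$ fails to be c.m. Your argument (using $y_i\wedge z\in\{x,y_i\}$) is valid only when $z\ge x$, so $\phi$ is a join-homomorphism only on the filter $\{z:z\ge x\}$. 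The simple fix is to take $\phi(z):=\{i:y_i\le z\vee x\}$, which is a global join-homomorphism because $z\mapsto z\vee x$ preserves joins and lands in the filter. The paper sidesteps the issue with a different construction: on the $2^{d_{\max}}$-element Boolean sub-lattice $\LL_x$ (whose elements are distinct by Proposition~\ref{prop:distlatticeproperty}) it sets $g$ to the relevant void functional, obtains the M\"obius weights $p$ from Proposition~\ref{prop:p-g}, extends $p$ by zero to $\LL$, and defines $f(z)=\sum_{y\ge z}p(y)$; then $f$ is c.m.\ by Proposition~\ref{prop:p-g} and $f|_{\LL_x}=g$, so the failure of $g^\alpha$ on $\LL_x$ transfers to $f^\alpha$ on $\LL$. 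Also note that what Theorem~\ref{thm:alp_finitecase} gives you directly is only that \emph{some} discrete derivative of $V_\XX^\alpha$ is negative, not specifically $\Delta_{y_d}\cdots\Delta_{y_1}f^\alpha(x)$; the paper's formulation (a negative derivative somewhere on $\LL_x$) is what one actually needs.
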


We remark that for finite non-distributive lattices, part~$(2)$ of the above theorem does not hold. We give example of a non-distributive lattice for which there exists non-integer $\alpha < d_{max}-1$ such that $f^\alp$ is c.m.\ whenever $f$ is c.m.\ (see Example ~\ref{example:nondist}). For infinite lattice the answer to the question of complete monotonicity of powers of c.m.\ functions depends on the structure of the lattice. If the lattice is a chain, then it is trivial to see that any non-negative, non-decreasing function is c.m.\ and hence so is any non-negative power. Also, one can construct an infinite non-distributive lattice such that if $f$ is c.m.\ function then $f^{\alpha}$ is c.m.\ function for all $\alp\ge1$ and the lower bound is sharp (see Example ~\ref{example:nondist}).




To prove Theorem~\ref{thm:cm_lattice} we use the following characterization of c.m.\ functions on a lattice. This proposition is similar to Theorem $1.2.15$ of ~\cite{molchanov2017} about the bijection between c.m.\ functions on continuous lattice and locally finite measures on lattice.
\begin{proposition}\label{prop:p-g}
Let $\LL$ be a finite lattice. A function $g:\LL\to [0,\infty)$ is c.m.\ if and only if there exists a function $p:\LL\to [0,\infty)$ such that 
\begin{align}\label{eq:p-g} g(x)=\sum_{y\ge x} p(y).\end{align}
\end{proposition}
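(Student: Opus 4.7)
My plan is to prove both implications directly. The ``if'' direction is an immediate interchange of summation, while the ``only if'' direction requires an inductive construction that realizes each value of $p$ as a specific discrete derivative of $g$, whose non-negativity then follows from complete monotonicity.

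For the \emph{sufficiency}, I would substitute $g(x)=\sum_{y\ge x}p(y)$ into the alternating-sum expansion of $\Delta_{x_n}\cdots\Delta_{x_1}g(x)$ and swap the sums. The coefficient attached to each $p(y)$ factors as
$$\mathbf{1}[y\ge x]\cdot\prod_{i=1}^{n}\bigl(1-\mathbf{1}[y\ge x_i]\bigr)\ge 0,$$
so non-negativity of $p$ forces $g$ to be c.m.

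For the \emph{necessity}, I would induct on the cardinality of the up-set $U(x):=\{y\in\LL:y\ge x\}$. At the top element of $\LL$ (which exists by finiteness), I set $p(\hat 1):=g(\hat 1)\ge 0$, which is non-negative since c.m.\ functions are non-negative by definition. In general, I let $y_1,\ldots,y_k$ be the elements covering $x$ and define
$$p(x):=g(x)-\sum_{y>x}p(y),$$
so that $g(x)=\sum_{y\ge x}p(y)$ holds by construction. The crux is showing $p(x)\ge 0$. Since $\LL$ is finite, every $y>x$ dominates some cover, so $\{y>x\}=\bigcup_i U(y_i)$ and $\bigcap_{i\in J}U(y_i)=U(y_J)$ with $y_J:=\vee_{i\in J}y_i$. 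Inclusion-exclusion combined with the inductive identity $g(y_J)=\sum_{y\ge y_J}p(y)$ (valid because $|U(y_J)|<|U(x)|$) collapses $\sum_{y>x}p(y)$ into $\sum_{\emptyset\ne J\subseteq[k]}(-1)^{|J|+1}g(y_J)$. Rearranging (and using $y_J=x\vee y_J$ for $J\ne\emptyset$) identifies $p(x)$ with
$$\sum_{J\subseteq[k]}(-1)^{|J|}g\bigl(x\vee(\vee_{i\in J}y_i)\bigr)=\Delta_{y_k}\cdots\Delta_{y_1}g(x),$$
which is $\ge 0$ by complete monotonicity of $g$.

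The main subtlety is choosing the right family of test elements in the discrete derivative: using exactly the covers of $x$ is what makes the inclusion-exclusion fold the recursion into a single $\Delta$-expression of $g$. Once this observation is made, the argument is essentially algebraic.
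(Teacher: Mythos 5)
Your proof is correct and follows essentially the same route as the paper: an inductive definition of $p$ at the top element downward, identification of $p(x)$ with the discrete derivative $\Delta_{y_k}\cdots\Delta_{y_1}g(x)$ over the covers of $x$, and for the converse, a sum-swap whose coefficient of $p(y)$ vanishes unless $y$ lies above $x$ but above no $x_i$. The only cosmetic difference is that you package the latter coefficient as the product $\mathbf{1}[y\ge x]\prod_i(1-\mathbf{1}[y\ge x_i])$, while the paper phrases it via the largest index set $S_z$ and binomial cancellation; these are the same computation.
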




We answer the second question (see Section ~\ref{subsec: gen set up of lattices}) on approximating $m$-divisible c.m.\ functions by infinitely divisible c.m.\ functions, for any lattice (not necessarily finite) that is not a chain. The following result is a generalization of Theorem ~\ref{thm:dist_void}.

\begin{theorem}\label{thm:cm_approx_lattice}
Let $\LL$ be any lattice which is not a chain. Let $\tau_m$ be as in \eqref{def: tau_n}. Then for any $m\geq 1$,
\begin{align*}\label{eq:n_div_approx1}
\frac{c_2}{m}\leq  \tau_m\leq \frac{c_1}{m}
\end{align*}
where the constants $c_1,c_2$ are as in Theorem~\ref{thm:dist_void}.
\end{theorem}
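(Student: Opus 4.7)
The plan is to prove the upper and lower bounds separately, in parallel with the proof of Theorem~\ref{thm:dist_void}. Since $\LL$ is not a chain, it contains two incomparable elements $a,b$, and the essential obstruction comes from the $4$-element sublattice $L_0=\{a\wedge b,a,b,a\vee b\}$ they generate.

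For the \textbf{upper bound}, given $f\in\mathcal{F}_m$, I set $h:=f^{1/m}$ (c.m.\ with values in $[0,1]$ by hypothesis) and define the accompanying function $g(x):=\exp(-m(1-h(x)))$. The pointwise convergent expansion
\[g^{1/k}(x)\;=\;e^{-m/k}\sum_{j\ge 0}\frac{(m/k)^j}{j!}\,h(x)^j\]
exhibits $g^{1/k}$ as a pointwise limit of non-negative combinations of products of the c.m.\ function $h$, and all three of these operations preserve complete monotonicity on $\LL$, so $g^{1/k}$ is c.m.\ for every $k$ and $g\in\mathcal{F}_\infty$. To bound $|f(x)-g(x)|=|u^m-e^{-m(1-u)}|$ with $u=h(x)\in[0,1]$, the inequality $\log u\le u-1$ gives $u^m\le e^{-m(1-u)}$, and setting $v=m(1-u)$ a standard estimate using $\log(1-t)\ge -t-t^2/(2(1-t))$ yields
\[0\le e^{-v}-(1-v/m)^m\le e^{-v}\,\frac{v^2}{2(m-v)};\]
maximising over $v$ gives $|f-g|\le c_1/m$ with the same $c_1$ as in Theorem~\ref{thm:dist_void}.

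For the \textbf{lower bound}, I would first note that for each $y\in\LL$ the function $x\mapsto \mathbf 1_{y\ge x}$ is c.m.\ on $\LL$, since a direct computation gives $\Delta_{x_n}\!\cdots\Delta_{x_1}\mathbf 1_{y\ge\cdot}(x)=\mathbf 1_{y\ge x}\prod_i(1-\mathbf 1_{y\ge x_i})\ge 0$. Using the incomparable $a,b$, define
\[h(x)\;:=\;\tfrac{2}{m}\mathbf 1_{a\ge x}+\tfrac{2}{m}\mathbf 1_{b\ge x}+\bigl(1-\tfrac{4}{m}\bigr)\mathbf 1_{a\vee b\ge x},\qquad f:=h^m.\]
Then $h$ is c.m.\ with $h\le 1$, so $f\in\mathcal{F}_m$, with $f(a\wedge b)=1$, $f(a)=f(b)=(1-2/m)^m$, $f(a\vee b)=(1-4/m)^m$. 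An elementary expansion gives
\[f(a)f(b)-f(a\wedge b)f(a\vee b)=(1-2/m)^{2m}-(1-4/m)^m\ge c/m\]
for an absolute constant $c>0$ and all sufficiently large $m$. On the other hand, any $g\in\mathcal{F}_\infty$ restricts to an infinitely divisible c.m.\ function on $L_0$, so from $\Delta_b\Delta_a g^{1/k}(a\wedge b)\ge 0$ together with the expansion $g^{1/k}=1+k^{-1}\log g+O(k^{-2})$ as $k\to\infty$ (and a short case-check when some value of $g$ vanishes) one derives the necessary multiplicative inequality $g(a)g(b)\le g(a\wedge b)g(a\vee b)$. Assuming $d(f,g)\le\eta$, combining the last two inequalities yields
\[\tfrac{c}{m}\;\le\;\eta\bigl[f(a)+f(b)+f(a\wedge b)+f(a\vee b)\bigr]+\eta^2\;\le\;4\eta+\eta^2,\]
which forces $\eta\ge c_2/m$.

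The main difficulty is the lower bound: the upper bound follows a familiar accompanying-distribution recipe, whereas the lower bound requires an explicit c.m.\ function on an arbitrary (possibly infinite) lattice whose failure of infinite divisibility can be made quantitative. The indicator construction above, together with the multiplicative inequality derived on the induced $4$-element sublattice, bypasses any need for a detailed structure theory of $\mathcal{F}_\infty$ on $\LL$.
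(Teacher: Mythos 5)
Your proof is correct and follows essentially the same two-step strategy as the paper: an accompanying-function argument for the upper bound and a four-element square sublattice for the lower bound. The main differences are cosmetic but worth noting. For the upper bound, both you and the paper use the accompanying function $g=\exp(-m(1-f^{1/m}))$; you spell out the power-series expansion exhibiting $g^{1/k}$ as a limit of non-negative combinations of powers of the c.m.\ function $h=f^{1/m}$, whereas the paper asserts the complete monotonicity of $\exp(-C(1-f^{1/m}))$ without giving this argument — so your version is slightly more self-contained. For the lower bound, the paper first defines the desired values on the four-element sublattice $\mathbb{M}$ and then invokes the extension lemma (Lemma~\ref{Corollary f-g}, which in turn rests on the structure result Proposition~\ref{prop:p-g}) to get a c.m.\ function on all of $\LL$; you instead build $h$ on $\LL$ directly as an explicit non-negative combination of the indicator functions $\mathbf{1}_{y\ge\cdot}$, using the elementary identity $\Delta_{x_n}\cdots\Delta_{x_1}\mathbf{1}_{y\ge\cdot}(x)=\mathbf{1}_{y\ge x}\prod_i(1-\mathbf{1}_{y\ge x_i})$. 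These two constructions are secretly the same — Proposition~\ref{prop:p-g} says precisely that c.m.\ functions on finite lattices are non-negative combinations of such indicators — but your version avoids importing the extension machinery and is arguably more transparent. One small caveat: your coefficient choices ($h(a)=h(b)=1-2/m$, $h(a\vee b)=1-4/m$, requiring $m\ge 4$ for non-negativity) differ from the paper's ($1-\tfrac{1}{2m}$ and $1-\tfrac{1}{m}$), so the numerical constants $c_1,c_2$ you obtain do not literally match those announced in Theorem~\ref{thm:dist_void} as the statement requires; this is easily repaired by using the paper's coefficients in your indicator combination, i.e.\ $h:=\tfrac{1}{2m}\mathbf{1}_{a\ge\cdot}+\tfrac{1}{2m}\mathbf{1}_{b\ge\cdot}+(1-\tfrac{1}{m})\mathbf{1}_{a\vee b\ge\cdot}$.
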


\subsection{Examples related to the results in previous subsections:}

In this subsection we present a few results which serve as examples related to the results in the previous subsections.\\ 

\paragraph{\textit {A counter-example for Theorem~\ref{thm:cm_lattice}}} First we give an example to show that part~$(2)$ of Theorem~\ref{thm:cm_lattice} does not hold for finite non-distributive lattices.  

\begin{example}\label{example:nondist}
Consider the non-distributive lattice $\LL$ shown in Figure~\ref{fig:1}.


\begin{figure}[h!]
\begin{tikzpicture}
 \Vertex[label=$0$, size=0.4, x=3,y=0]{0} 
  \Vertex[label=$a$, size=0.4, x=0,y=3]{a} 
    \Vertex[label=$b$, size=0.4, x=3,y=3]{b} 
     \Vertex[label=$c$, size=0.4, x=6,y=3]{c} 
     \Vertex[label=$i$, size=0.4, x=3,y=6]{i} 
  \Edge[bend=0,Direct](0)(a)
  \Edge[bend=0,Direct](0)(b)
  \Edge[bend=0,Direct](0)(c)
  \Edge[bend=0,Direct](c)(i)
  \Edge[bend=0,Direct](b)(i)
  \Edge[bend=0,Direct](a)(i)
\end{tikzpicture}
\caption{Non distributive lattice $\LL$}
\label{fig:1}
\end{figure}

Here $d_{max}=3$. We will show that for any c.m.\ function $f$ on $\LL$, the function $f^{\alpha}$ is c.m.\ for any $\alpha\geq 1$.
Let $f$ be a c.m.\ function on $\LL$ and $p$ be the corresponding function given by Proposition ~\ref{prop:p-g}. We want to show that $f^{\alpha}$ is c.m.\ $\forall \alpha\geq 1$. For that it is enough to show  $h(\alpha)=(p_0+p_a+p_b+p_c+p_i)^{\alpha}-(p_a+p_i)^{\alpha}-(p_b+p_i)^{\alpha}-(p_c+p_i)^{\alpha}+2(p_i)^{\alpha} \ge 0$ 
 for any $\alp\geq 1$ (using Proposition ~\ref{prop:p-g}). Using Laguerre's sign change argument (Proposition $3.2$ of~\cite{jain}), it is easy to see that $h(\alpha)\geq 0$  for $\alpha\geq 1$. One can check that for $p_0=0$, $h(\alpha)<0$ for $0<\alpha<1$.
 
 Note that a similar argument works even if there are infinitely many covering elements of lattice point $0$ instead of $\{a,b,c\}$.
\end{example}

\paragraph{\textit{Examples related to Theorem~\ref{thm:alp_finitecase}}} Next we study a question analogous to a question addressed in~\cite{BD2022}. For a random set $\XX\subseteq [n]$, let $S_{\XX}$ denote the set of $\alpha>0$ for which $\XX_{\alpha}$ exists. Then by Theorem~\ref{thm:alp_finitecase} we have $\cap_{\XX\subset [n]}S_\XX=\{1,\ldots, n-2\}\cup [n-1, \infty)$.
 We ask the following question for a fixed random subset $\XX$ of $[n]$: is the set of $\alp$ for which $\XX_{\alp}$ exists necessarily of the form $F \cup [\alp_*,\infty)$, where $F$ is a finite set? For many standard examples of $\XX$ the set $S_\XX$ turns out to be union of a finite set and a semi-infinite interval. We show by giving example that $S_\XX$ need not always be of that form. One can check (using~\eqref{eq: existence of random set}) that if $\XX$ is a random subset of $[n]$ and $1\leq n \leq 3$, then $S_\XX$ is union of a finite set and a semi-infinite interval. For $n \ge 4$, we give examples of random set $\XX\subset [n]$ such that $S_\XX$ has at least two interval components of positive length. The following theorem is analogous to Theorem $1$ in~\cite{BD2022}.

 \begin{theorem} \label{thm:multipleintervals}
Fix $n\ge 4$. For any $2\leq k\leq n-2$, there exists $\delta>0$ and a random subset $\XX^{(k)}$ of $[n]$ with $Q_{k}(A):=\P(\XX^{(k)}=A)$ depending only on $|A|$ for any $A\subset [n]$ such that the following holds.
\begin{enumerate}
\item $Q_{k}(A)=0$ if and only if $A=\emptyset$ or $1<|A|\leq n-k+1$
    \item $\XX_{\alpha}^{(k)}$ does not exist if $\alpha\leq n-k-1$ and $\alpha$ is non-integer
    \item $\XX_{\alpha}^{(k)}$ exists when $\alpha\in [j,j+\delta)$, $\forall n-k\leq j\leq n-2$
    \item $\XX_{\alpha}^{(k)}$ does not exist for some $\alpha_j\in (j,j+1), \forall n-k\leq j\leq n-2$
    \item $r_{k,B}(\alpha):=\sum_{A\subset B} (-1)^{|B|-|A|} \P\{\XX^{(k)} \subset A\}^\alp >0,$ for $\alpha\in(j-\delta, j+\delta), \forall 1\leq j\leq n-2,$ if $|B|\geq n-k+2$.
\end{enumerate}
\end{theorem}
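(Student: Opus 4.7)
The plan is to construct $\XX^{(k)}$ as a one-parameter permutation-invariant mixture and tune the parameter afterwards. For $\eta \in (0, 1)$ to be specified later, take
\[
Q_k(A) = \begin{cases} (1-\eta)/n & \text{if } |A| = 1, \\ \eta/\bigl((k-1)\binom{n}{\ell}\bigr) & \text{if } |A| = \ell \in \{n-k+2, \dots, n\}, \\ 0 & \text{otherwise,} \end{cases}
\]
so (1) is immediate. By permutation invariance, $g(u) := \P(\XX^{(k)} \subset A)$ depends only on $u = |A|$, so $r_{k,B}(\alpha)$ depends only on $|B|$; write $r_{k,m}(\alpha)$ for the common value when $|B| = m$. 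Möbius inversion on $(2^{[n]}, \subseteq)$, i.e., Proposition~\ref{prop:p-g}, identifies existence of $\XX_\alpha^{(k)}$ with the non-negativity of
\[
r_{k,m}(\alpha) = \sum_{u=0}^m (-1)^{m-u}\binom{m}{u} g(u)^\alpha
\]
for every $m = 0, \ldots, n$.

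A direct computation gives $g(u) = (1-\eta)u/n$ for $u \leq n-k+1$, with additional non-negative contributions from the larger sizes appearing only for $u \geq n-k+2$. Consequently, for $m \leq n-k+1$,
\[
r_{k,m}(\alpha) = \bigl((1-\eta)/n\bigr)^\alpha h_m(\alpha), \qquad h_m(\alpha) := \sum_{j=1}^m \binom{m}{j}(-1)^{m-j} j^\alpha.
\]
Laguerre's sign rule (Proposition~3.2 of~\cite{jain}) shows that $h_m$ has exactly the $m-1$ positive real zeros $\alpha = 1, 2, \ldots, m-1$, is positive on $(m-1, \infty)$, and alternates sign on the intervening unit intervals. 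Assertion (2) is then immediate: for non-integer $\alpha \leq n-k-1$, put $m := \lfloor \alpha \rfloor + 2 \in \{2, \ldots, n-k\}$, so $\alpha$ lies in the interval $(m-2, m-1)$ on which $h_m$ is strictly negative, giving $r_{k,m}(\alpha) < 0$.

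Turning to (5) and (3), at any positive integer $\alpha = j$ one has $r_{k,m}(j) = \P(|\XX_j^{(k)}| = m)/\binom{n}{m}$, where $\XX_j^{(k)}$ is the union of $j$ i.i.d.\ copies of $\XX^{(k)}$. For $m \in \{n-k+2, \ldots, n\}$ and $j \in \{1, \ldots, n-2\}$ this is strictly positive, since one may take a single copy to be an $m$-subset from the support and force the remaining $j-1$ copies to be singletons inside it; continuity in $\alpha$ then yields a common $\delta > 0$ with $r_{k,m}(\alpha) > 0$ on $(j-\delta, j+\delta)$ for all such $(j, m)$, proving (5). Combining this with the small-$m$ analysis gives (3): for $\alpha \in [j, j+\delta)$ with $j \in \{n-k, \ldots, n-2\}$ and $m \leq n-k+1$, either $m \leq j$ (so $\alpha > m-1$ and $h_m(\alpha) > 0$) or $m = n-k+1 = j+1$, which occurs only for $j = n-k$ and then $h_m(j) = 0$ with strictly positive right derivative (again by the sign rule).

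The only assertion that actually forces $\eta$ to be small is (4). As $\eta \to 0^+$, $g(u) \to u/n$ uniformly in $u$, so $r_{k,m}(\alpha) \to (1/n)^\alpha h_m(\alpha)$ uniformly in $\alpha$ on compact subsets of $(0, \infty)$. For each $j \in \{n-k, \ldots, n-2\}$, set $m = j+2 \in \{n-k+2, \ldots, n\}$; the sign rule gives $h_m < 0$ on $(j, j+1) = (m-2, m-1)$, so for all sufficiently small $\eta$ there exists $\alpha_j \in (j, j+1)$ with $r_{k,j+2}(\alpha_j) < 0$, establishing (4). The sole remaining technical step is to fix one $\eta > 0$ (and then a common $\delta > 0$) making (3)--(5) hold simultaneously for the finitely many $(m, j)$ pairs involved; as each requirement is an open condition satisfied near $(\eta, \delta) = (0, 0^+)$, this is achieved by straightforward intersection.
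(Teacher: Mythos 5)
Your proposal is correct, and it takes a cleaner, non-inductive route to the same conclusion. The paper proves the theorem by induction on $k$: the base case $k=2$ puts mass on singletons and the full set, and the inductive step passes from $\XX^{(\ell)}$ to $\XX^{(\ell+1)}$ by subtracting a small multiple of $\epsilon$ from the existing masses and placing fresh mass $\epsilon c_2$ on the level $|A|=n-\ell+1$; the sign pattern of the relevant exponential sums is encapsulated in Proposition~\ref{prop:formultipleintervals} (itself derived from Theorem~\ref{thm:oneinterval} via the semigroup structure of $S_\XX$) and in the perturbation lemma used at each inductive step. You instead write down the distribution $Q_k$ in closed form for every $k$ at once, so that $r_{k,m}$ for $m\le n-k+1$ factors exactly as $((1-\eta)/n)^\alpha h_m(\alpha)$, and you read off the full sign pattern of $h_m$ directly from the fact that it is the $m$-th forward difference of $x^\alpha$ at $0$ (vanishing at $1,\dots,m-1$) combined with Laguerre's rule (giving at most $m-1$ zeros, hence exactly these with alternating signs). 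For assertion (5) you replace the paper's inductive continuity argument with the cleaner probabilistic observation that $r_{k,m}(j)=\P(|\XX_j^{(k)}|=m)/\binom{n}{m}>0$, verified by a direct union construction, and for (4) you use a uniform limit $\eta\to 0^+$ to transfer the negativity of $h_m$ on $(m-2,m-1)$ to $r_{k,m}$. This buys you a more transparent construction and sidesteps the induction entirely, at the cost of nothing; the ingredients (Laguerre's rule, M\"obius inversion characterizing existence of $\XX_\alpha$, continuity in $\alpha$) are the same as the paper's. Two small expository nits: the citation of Proposition~\ref{prop:p-g} for the existence criterion should really point to the M\"obius-inversion identity \eqref{eq: existence of random set} used in Section~\ref{sec: proof of one, multiple}; and in the $m=j+1$ case of (3) you do not need the positive right derivative of $h_m$ at $m-1$, only that $h_m\ge 0$ on $[m-1,\infty)$, which already follows from $h_m(m-1)=0$ and positivity on $(m-1,\infty)$.
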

The above theorem says that for fixed $n\ge 4$ and $2\le k\le n-2$, the set $S_{\mathcal X^{(k)}}=\{\alp\ge 0: \mathcal X^{(k)}_\alp \mbox{ exists}\}$ has at least $k$ interval components of positive length; for each of the integers from $n-k$ to $n-2$ there is one interval containing it and the last one is the semi-infinite interval containing
$n-1$. Furthermore, $S_{\mathcal X^{(k)}}\cap [0, n-k-1]=\{0, 1, \ldots, n-k-1\}$.
 
We mentioned that for many standard examples of $\XX$ the set $S_\XX$ turns out to be union of a finite set and a semi-infinite interval. It is natural to look for a class of random sets $\XX$ for which $S_\XX$ has only one interval component of positive length. In this direction we have the following result. It says that if $\XX \subset [n]$ has positive mass (not necessarily uniform) only on singletons, then $S_\XX$ has only one interval component of positive length.
\begin{theorem}\label{thm:oneinterval}
Fix $n\in\N$. Let $0<p_1,\ldots,p_n< 1$ such that $\sum_{i=1}^n p_i=1$. Let $\XX$ be a random subset of $[n]$ with $\P\{\XX=\{i\}\}=p_i$ for $i=1,\ldots,n$. Then $\XX_\alp$ exists if and only if $\alp\in\{0,1,\ldots,n-2\}\cup [n-1, \infty)$.
\end{theorem}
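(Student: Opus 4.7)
The plan is to reduce existence of $\XX_\alpha$ to nonnegativity of Möbius-style point masses on $2^{[n]}$, dispose of the easy cases using known results, and for the remaining non-integer $\alpha\in(0,n-1)$ exhibit a single subset $C\subseteq[n]$ whose putative mass is strictly negative. Writing $s_K:=\sum_{i\in K} p_i$, we have $V_\XX(K)=1-s_K$, and Proposition~\ref{prop:p-g} gives that $\XX_\alpha$ exists iff
\[
P_\alpha(C)\;:=\;\sum_{B\subseteq C}(-1)^{|C|-|B|}\, s_B^{\alpha}\;\ge\;0\qquad\text{for every } C\subseteq[n],
\]
in which case $P_\alpha(C)=\P(\XX_\alpha=C)$. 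For integer $\alpha\in\{0,1,\ldots,n-2\}$ take $\XX_\alpha$ to be the union of $\alpha$ i.i.d.\ copies of $\XX$; for $\alpha\ge n-1$ invoke Theorem~\ref{thm:alp_finitecase}. So I only need to show $P_\alpha(C)<0$ for some $C$ when $\alpha$ is non-integer in $(0,n-1)$.

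For such $\alpha$, write $\alpha\in(k,k+1)$ with $0\le k\le n-2$ and take any $C\subseteq[n]$ with $|C|=k+2$. My main analytic tool is the integral representation
\[
x^\alpha\;=\;-\,\frac{\alpha(\alpha-1)\cdots(\alpha-k)}{\Gamma(k+1-\alpha)}\int_0^\infty \Phi_k(xt)\,t^{-\alpha-1}\,dt,\qquad \Phi_k(u):=e^{-u}-\sum_{j=0}^{k}\frac{(-u)^j}{j!},
\]
valid for $x>0$. I would derive this by induction on $k$: the base $k=0$ is the classical Bernstein representation of $x^\alpha$ on $(0,1)$, and the induction step follows by integration by parts using the identity $\Phi_k'=-\Phi_{k-1}$, with boundary terms vanishing because $\Phi_k(u)\sim(-u)^{k+1}/(k+1)!$ near $0$ and $\Phi_k(u)\sim -(-u)^k/k!$ near $\infty$ (compatible with $\alpha\in(k,k+1)$).

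Plugging $x=s_B$ into the representation and interchanging sum and integral, the polynomial pieces $\sum_{j=0}^{k}(-s_Bt)^j/j!$ of $\Phi_k(s_Bt)$ are polynomials in the indicators $\{\mathbf 1[i\in B]\}_{i\in C}$ of total degree at most $k=|C|-2$; every such monomial omits some index of $C$ and is therefore annihilated by the alternating sum $\sum_{B}(-1)^{|C|-|B|}$. Only the exponential piece remains, and since $\sum_{B\subseteq C}(-1)^{|C|-|B|}e^{-s_Bt}=(-1)^{|C|}\prod_{i\in C}(1-e^{-tp_i})$, using $|C|=k+2$ I obtain
\[
P_\alpha(C)\;=\;-\,\frac{\alpha(\alpha-1)\cdots(\alpha-k)}{\Gamma(k+1-\alpha)}\int_0^\infty \prod_{i\in C}(1-e^{-tp_i})\, t^{-\alpha-1}\, dt.
\]
Every factor $\alpha-j$ with $0\le j\le k$ is positive, $\Gamma(k+1-\alpha)>0$, and the integral is strictly positive and convergent (integrand $\sim t^{k+1-\alpha}$ at $0$ and $\sim t^{-\alpha-1}$ at $\infty$), so $P_\alpha(C)<0$ and $\XX_\alpha$ does not exist. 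The main technical step is establishing the integral representation with the correct sign and constant; once it and the vanishing of boundary terms are in hand, the cancellation of polynomial contributions and the product formula for the exponential piece are routine.
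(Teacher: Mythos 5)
Your approach is genuinely different from the paper's. The paper proves this theorem via Schur convexity: Lemma~\ref{lem:1singleton} establishes strict Schur convexity of the symmetric exponential polynomial $f_{n,\alpha}(p_1,\ldots,p_n)$ for $\alpha\in(n-1,n)$ by a two-level induction on $n$, then Lemma~\ref{lem:2singleton} shows $f_{n,\alpha}$ vanishes on $\partial A_n$ and is therefore strictly negative in the interior, and finally the semigroup property of $\{\alpha:\XX_\alpha \text{ exists}\}$ is used to reduce all non-integer $\alpha<n-1$ to the interval $(n-2,n-1)$. Your integral-representation argument bypasses both the Schur-convexity machinery and the semigroup reduction: you handle each $\alpha\in(k,k+1)$ with $0\le k\le n-2$ uniformly by choosing $C$ with $|C|=k+2$, and you produce a closed-form expression
\[
P_\alpha(C)=-\frac{\alpha(\alpha-1)\cdots(\alpha-k)}{\Gamma(k+1-\alpha)}\int_0^\infty\prod_{i\in C}\bigl(1-e^{-tp_i}\bigr)\,t^{-\alpha-1}\,dt<0,
\]
which is manifestly negative. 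This is arguably shorter and more transparent; the paper's route has the virtue of being self-contained and more elementary (no Gamma-function identity or Hadamard-regularized Mellin transform is needed).

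One sign slip, however: the representation as you state it,
\[
x^\alpha=-\frac{\alpha(\alpha-1)\cdots(\alpha-k)}{\Gamma(k+1-\alpha)}\int_0^\infty\Phi_k(xt)\,t^{-\alpha-1}\,dt,
\]
is correct only for $k$ even. The correct constant is $1/\Gamma(-\alpha)$, and since $\Gamma(-\alpha)=\frac{\Gamma(k+1-\alpha)}{(-\alpha)(-\alpha+1)\cdots(-\alpha+k)}=\frac{(-1)^{k+1}\Gamma(k+1-\alpha)}{\alpha(\alpha-1)\cdots(\alpha-k)}$, the correct prefactor is $(-1)^{k+1}\frac{\alpha(\alpha-1)\cdots(\alpha-k)}{\Gamma(k+1-\alpha)}$, not $-\frac{\alpha\cdots(\alpha-k)}{\Gamma(k+1-\alpha)}$. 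This is also what your own induction gives: $I_k(\alpha)=-\frac{x}{\alpha}I_{k-1}(\alpha-1)$ flips the sign at each step. The error is benign in the end because in the next line you correctly pick up the additional factor $(-1)^{|C|}=(-1)^{k+2}=(-1)^k$ from $\sum_{B\subseteq C}(-1)^{|C|-|B|}e^{-s_Bt}=(-1)^{|C|}\prod_{i\in C}(1-e^{-tp_i})$, and $(-1)^{k+1}\cdot(-1)^{k}=-1$, so your stated final formula for $P_\alpha(C)$ is correct and so is the conclusion. But the intermediate formula should be written with $(-1)^{k+1}$ (or simply as $x^\alpha=\Gamma(-\alpha)^{-1}\int_0^\infty\Phi_k(xt)\,t^{-\alpha-1}\,dt$), otherwise a reader following the steps literally would derive the wrong sign for odd $k$.
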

 The above theorem is similar to Theorem $1.1$ of~\citet{jain} on Hadamard powers of p.s.d.\ matrices.\\


\paragraph{\textit{Special examples of c.m.\ sequence and c.m.\ function}} We mention that if $\{a_n\}_{n\geq 0}$ is a c.m.\ sequence, then $\{a_n^\beta\}_{n\geq 0}$ is also c.m.\ sequence for $\beta\in\N$. Indeed, by Hausdorff moment sequence theorem, $\{a_n\}_{n\geq 0}$ is moment sequence (up to scaling) of a random variable $Z\in[0,1]$. Then $\{a_n^\beta\}_{n\geq 0}$ is moment sequence (up to scaling) of $\prod_{i=1}^\beta Z_i$, where $Z_1,\dots,Z_\beta$ are i.i.d.\ copies of $Z$. We have the following result for c.m.\ sequences and c.m.\ functions, which may be of independent interest.

\begin{theorem}\label{thm: TJ trick}
There exists $f:\N\rightarrow [0,\infty)$ such that $f$ is c.m.\ sequence and $f^{\alpha}$ is c.m.\ sequence if and only if $\alp\in\N$.

Also, there exists $g:(0,\infty)\rightarrow [0,\infty)$ such that $g$ is a c.m.\ function and $g^\alp$ is c.m.\ if and only if $\alpha\in\N$.
\end{theorem}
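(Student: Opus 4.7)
The plan is to take the explicit witnesses $f(n)=1+c^n$ (for any fixed $c\in(0,1)$) and $g(t)=1+e^{-t}$, and for non-integer $\alpha>0$ obtain a contradiction by writing $(1+z)^\alpha$ as its binomial series and invoking uniqueness of the representing measure. That $f$ and $g$ are themselves c.m.\ is immediate, either by direct inspection ($(-D)^k f(n)=c^n(1-c)^k\ge 0$ and $(-1)^k g^{(k)}(t)=e^{-t}\ge 0$ for $k\ge 1$) or from the representations
\begin{equation*}
f(n)=\int_{[0,1]} x^n\,d(\delta_1+\delta_c)(x),\qquad g(t)=\int_{[0,\infty)} e^{-tx}\,d(\delta_0+\delta_1)(x),
\end{equation*}
via the Hausdorff and Bernstein theorems. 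For $\alpha\in\N$, $f^\alpha$ and $g^\alpha$ are c.m.\ because products of c.m.\ sequences/functions are c.m., as noted earlier in the paper.

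Now fix non-integer $\alpha>0$. Since $c<1$ and $e^{-t}<1$ for $t>0$, and $|\binom{\alpha}{k}|\sim |\Gamma(-\alpha)|^{-1}k^{-\alpha-1}$ is summable, the binomial series
\begin{equation*}
f(n)^\alpha=\sum_{k=0}^\infty \binom{\alpha}{k}c^{kn},\qquad g(t)^\alpha=\sum_{k=0}^\infty \binom{\alpha}{k}e^{-kt}
\end{equation*}
converges absolutely and exhibits $f^\alpha$ as the moment sequence of the finite signed measure $\nu_s:=\sum_{k\ge 0}\binom{\alpha}{k}\delta_{c^k}$ on $[0,1]$, and $g^\alpha$ as the Laplace transform of the finite signed measure $\nu_f:=\sum_{k\ge 0}\binom{\alpha}{k}\delta_k$ on $[0,\infty)$. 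Since $\alpha\notin\N$, infinitely many $\binom{\alpha}{k}$ are negative, so neither $\nu_s$ nor $\nu_f$ is a positive measure.

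To conclude, suppose $f^\alpha$ were c.m. By Hausdorff there would exist a positive measure $\mu$ on $[0,1]$ with the same moments as $\nu_s$; density of polynomials in $C[0,1]$ (Stone--Weierstrass) implies that finite signed measures on $[0,1]$ are determined by their moments, so $\mu=\nu_s$, contradicting positivity. The argument for $g^\alpha$ is parallel, using injectivity of the Laplace transform on finite signed measures on $[0,\infty)$ combined with Bernstein's theorem. The main---and quite mild---obstacle is justifying that $\nu_s,\nu_f$ are genuine \emph{finite} signed measures (which is precisely where the hypothesis $\alpha>0$ enters) and recording the signed-measure form of the uniqueness statements; both are classical.
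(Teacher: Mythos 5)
Your proof is correct, and it takes a genuinely different route to the negativity claim than the paper does, even though the witness functions are essentially the same (a mixture $a\,\delta_1 + b\,\delta_x$ for the sequence and $a\,\delta_0 + b\,\delta_y$ for the function). The paper passes through the Hankel criterion: a Hausdorff moment sequence yields a positive semidefinite Hankel array, and non-p.s.d.-ness of the Hadamard power $[(1+x^ix^j)^\alpha]$ is extracted from Theorem~1.1 of~\cite{jain} on Hadamard powers of p.s.d.\ matrices; the continuous case is then reduced to the discrete one by restricting $g^\alpha$ to $t\in\N$. You instead expand $f^\alpha$ (resp.\ $g^\alpha$) via the binomial series, exhibit it as the moment sequence (resp.\ Laplace transform) of an explicit \emph{finite signed} measure $\sum_k\binom{\alpha}{k}\delta_{c^k}$ (resp.\ $\sum_k\binom{\alpha}{k}\delta_k$), observe that non-integer $\alpha$ forces infinitely many sign changes in $\binom{\alpha}{k}$, and then appeal to uniqueness of the representing signed measure (Stone--Weierstrass on $[0,1]$; injectivity of the Laplace transform on finite signed measures on $[0,\infty)$) to rule out any positive representing measure. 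Your argument is more self-contained, avoiding the Hadamard-power machinery entirely, and it handles the two cases symmetrically; the paper's version is shorter given that it already has~\cite{jain} at hand, and it fits better with the paper's overall FitzGerald--Horn theme. One small point worth recording in a write-up: the absolute summability $\sum_k|\binom{\alpha}{k}|<\infty$ (hence finiteness of total variation of $\nu_s$, $\nu_f$) uses $\alpha>0$, and for $n=0$ (resp.\ $t\to 0^+$) the series still sums to $2^\alpha$, so the $0$-th moment/total mass matches as required.
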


\paragraph{\textit{Outline of the rest of the paper}} We first prove Theorem~\ref{thm:alp_finitecase} in Section~\ref{sec: Proof of alp_finitecase}.
 We then prove
Proposition~\ref{prop:p-g} and use Theorem~\ref{thm:alp_finitecase} to complete the proof of Theorem~\ref{thm:cm_lattice} in Section~\ref{sec: proof of prop p-g}. In Section~\ref{sec: proof of dist} we first prove Theorem~\ref{thm:dist_void} using the fact that Poisson mixture of i.i.d.\ random sets is infinitely divisible to get the optimal upper bound. Then we use a sub-lattice of four elements to get the lower bound. We then extend the optimal approximation rate to Theorem~\ref{thm:cm_approx_lattice}. The proofs of Theorems~\ref{thm:multipleintervals} and~\ref{thm:oneinterval} are given in Section~\ref{sec: proof of one, multiple}.  We prove Theorem~\ref{thm: TJ trick} in Section~\ref{sec: proof of TJ trick}. The proof is independent of other proofs.

\section{Proof of Theorem~\ref{thm:alp_finitecase}}\label{sec: Proof of alp_finitecase}
%


We prove the first part of the theorem. The second part follows from the stronger result Theorem~\ref{thm:oneinterval}. We prove the first part by induction on $n$.
By Choquet's theorem (see Theorem $1.1.29$~\cite{molchanov2017}), it is enough to prove that if $V$ is the void functional of a random subset $\XX$ of $[n]$, then the functional $V^\alp$ is l.s.c.\ and c.m.\ The lower semi-continuity of $V^\alp$ follows from that of $V$. So, it suffices to prove that if $V$ is a c.m.\ function on $2^{[n]}$, then $V^\alp$ is c.m.\ for all $\alp\ge n-1$. For $n=1$ it is easy to check from the definition. We assume that the fact is true for $n=m-1$ with $m\ge 2$. Let $V$ be a c.m.\ function on $2^{[m]}$ and let $\alp \ge m-1$. We want to prove that $V^\alp$ is c.m.\ For $B\subset [m]$ we write
\begin{align*}
  V^\alp(B)= V^{\alpha}(\{m\}) \frac{V^{\alpha}(B\cup\{m\})}{V^{\alpha}(\{m\})} + (1-V^{\alpha}(\{m\})) \frac{V^{\alpha}(B)-V^{\alpha}(B\cup\{m\})}{1-V^{\alpha}(\{m\})}
\end{align*}
Observe that it is enough to prove that the functional $U$ and $W$ defined on $2^{[m]}$ by
\[
U(B):=\frac{V^{\alpha}(B\cup\{m\})}{V^{\alpha}(\{m\})} \qquad \mb{ and } \qquad W(B):= \frac{V^{\alpha}(B)-V^{\alpha}(B\cup\{m\})}{1-V^{\alpha}(\{m\})}
\]
are c.m.\

First, we show that $U$ is c.m.\ It is easy to see that the function $B\mapsto \frac{V(B\cup\{m\})}{V(\{m\})}$ is c.m.\ on $2^{[m-1]}$. Since $\alp \ge m-1>m-2$, $U$ is c.m.\ on $2^{[m-1]}$ by the induction hypothesis. Now for any $B_0, B_1,\ldots, B_k \in 2^{[m]}$ we have
\[
\Delta_{B_k}\ldots\Delta_{B_1}U(B_0)= \Delta_{B_k\setminus \{m\}}\ldots\Delta_{B_1\setminus \{m\}}U(B_0\setminus \{m\}) \ge 0.
\]
Thus $U$ is c.m.\ on $2^{[m]}$.

To show that $W$ is c.m.\ on $2^{[m]}$ we first show that $W$ is c.m.\ on $2^{[m-1]}$. Here we use a trick which was used by~\citet{FH77}. For $\beta\geq 1$ and $a,b\geq 0$, an explicit evaluation of the integral shows that,
\begin{align*}
    a^{\beta}-b^{\beta}=\beta \int_{0}^{1}(a-b)(ta+(1-t)b)^{\beta-1}dt.
\end{align*}
Using this, for $B\subset [m-1]$ we write
\ba
W(B)&=\frac{\alpha}{1-V^{\alpha}(\{m\})}\int_0^1(V(B)-V(B\cup \{m\})[\lambda V(B)+(1-\lambda)V(B\cup \{m\})]^{\alpha-1}d\lambda. \\
\ea
It is easy to see from the definition that the function $B\mapsto (V(B)-V(B\cup \{m\})=\Delta_{\{m\}}V(B)$ is c.m.\ on $2^{[m-1]}$. Also, for any $\lambda\in [0, 1]$, the function $B \mapsto \lambda V(B)+(1-\lambda)V(B\cup \{m\})$ is c.m.\ on $2^{[m-1]}$. Now since $\alp-1\ge m-2$, the second factor in the integrand is c.m.\ on $2^{[m-1]}$ by the induction hypothesis. Using the fact that the product of two void functionals is a void functional, one can show that $W$ is c.m.\ on $2^{[m-1]}$. Now suppose $B_0, B_1, \ldots, B_k\in 2^{[m]}$. If $m\in B_0$, then $\Delta_{B_k}\ldots\Delta_{B_1}W(B_0)=0$. Suppose $m \notin B_0$. Without loss of generality, let $m\notin B_i$ for $i=0,\ldots \ell$ and $m\in B_i$ for $i=\ell+1,\ldots,k$, where $\ell\geq 0$. Then we have
\[
\Delta_{B_k}\ldots\Delta_{B_1}W(B_0)=\Delta_{B_\ell}\ldots\Delta_{B_1}W(B_0)\ge 0
\]
Thus $W$ is c.m.\ on $2^{[m]}$ and this completes the proof.

\section{Proofs of Proposition ~\ref{prop:p-g} and Theorem~\ref{thm:cm_lattice}}\label{sec: proof of prop p-g}
We first prove Proposition~\ref{prop:p-g}.
\begin{proof}[Proof of Proposition~\ref{prop:p-g}]
Let $g$ be a c.m.\ function on $\LL$. We define a function $p$ on $\LL$ inductively as follows. For the maximum element $m$ of $\LL$ define $p(m):=g(m)$. Inductively, having defined $p(y)$ for all $y>x$, define \[p(x):=g(x)-\sum_{y>x} p(y).\]
By the definition, $p$ satisfies~\eqref{eq:p-g}. We now use the complete monotonicity of $g$ to show that $p$ is non-negative. Fix $x\in \LL$. Let $x_1,\ldots,x_r$ be all the covering elements of $x$, that is, 
\[
\{y\in\LL: y>x, \nexists z\in\LL \text{ with } x < z < y\}= \{x_1,\ldots,x_r\}.\]
Then we have
\ba
p(x)&=g(x)-\sum_{y>x} p(y)\\
&=g(x)-\sum_{i=1}^r \sum_{y\geq x_i} p(y) + \sum_{1\le i<j\le r}\sum_{y\geq x_i\vee x_j} p(y) -\sum_{1\le i<j<k\le r}\sum_{y\geq x_i\vee x_j\vee x_k} p(y) + \ldots\\
&=g(x)-\sum_{i=1}^r g(x_i) + \sum_{1\le i<j\le r}g(x_i\vee x_j) -\sum_{1\le i<j<k\le r}g(x_i\vee x_j\vee x_k) + \ldots\\
&=\Delta_{x_r}\ldots\Delta_{x_1}g(x) \ge 0.
\ea
To prove the converse, suppose $g$ is any non-negative function on $\LL$ and suppose there exists a non-negative function $p$ on $\LL$ such that \eqref{eq:p-g} holds. We want to prove that $g$ is c.m.\ Let $y, y_1, \ldots, y_k\in\LL$. For any $J\subset[k]$, we denote $y_J=\bigvee_{j\in J}y_j$. We have
\ba
\Delta_{y_k}\ldots\Delta_{y_1}g(y)&= \sum_{J\subset[k]}(-1)^{|J|}g\left(y\vee y_J\right)\\
&=\sum_{J\subset[k]}(-1)^{|J|} \sum_{z\ge y\vee y_J} p(z)\\
&= \sum_{z\ge y}p(z)\sum_{\{J\subset[k]:z\ge y\vee y_J\}}(-1)^{|J|}.
\ea
If $z\ge y$ but $z\ngeq y\vee y_i$ for any $i\in [k]$, then $\sum_{J\subset[k],z\ge y\vee y_J}(-1)^{|J|}=1$. Now suppose $z\geq y\vee y_i$ for some $i\in [k]$. Let $S_z$ be the largest subset of $[k]$ such that $z\ge y\vee y_{S_z}$. Then $z\ge y\vee y_J$ for any subset $J$ of $S_z$. Hence 
\[\sum_{J\subset[k],z\ge y\vee y_J}(-1)^{|J|} = \sum_{J\subset S_z}(-1)^{|J|}= \sum_{i=0}^{|S_z|} (-1)^i\binom{|S_z|}{i}=0.\]
Thus we have proved that $\Delta_{y_k}\ldots\Delta_{y_1}g(y)\ge 0$.
\end{proof}

To prove Theorem ~\ref{thm:cm_lattice} we use the following property of distributive lattices.
\begin{proposition}\label{prop:distlatticeproperty}
Let $\LL$ be a finite distributive lattice. Fix $x\in\LL$. Let $x_1,\ldots,x_n$ be all the covering elements of $x$.
%
Then for any $1\le i_1<\ldots<i_\ell\le n$ and $1\le j_1<\ldots<j_m\le n$ with $1\le \ell \le m\le n$ and $\{i_1,\dots,i_{\ell}\}\neq \{j_1,\dots,j_{m}\}$ we have
\[x_{i_1}\vee\ldots\vee x_{i_\ell} \neq x_{j_1}\vee\ldots\vee x_{j_m}.\]
\end{proposition}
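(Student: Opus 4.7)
The plan is to argue by contradiction using distributivity and the fact that $x_k$ covers $x$. Suppose two distinct nonempty subsets $I=\{i_1,\ldots,i_\ell\}$ and $J=\{j_1,\ldots,j_m\}$ of $[n]$ satisfy
\[
\bigvee_{i\in I} x_i \;=\; \bigvee_{j\in J} x_j.
\]
Since $I\neq J$, either $I\setminus J$ or $J\setminus I$ is nonempty; renaming if necessary, pick some $k\in I\setminus J$. Then $x_k\le \bigvee_{i\in I} x_i=\bigvee_{j\in J} x_j$.

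Next I would use distributivity. In a finite distributive lattice, the identity $a\wedge(b\vee c)=(a\wedge b)\vee (a\wedge c)$ extends by induction to any finite join, so meeting both sides with $x_k$ gives
\[
x_k \;=\; x_k\wedge \bigvee_{j\in J} x_j \;=\; \bigvee_{j\in J}(x_k\wedge x_j).
\]
For each $j\in J$ one has $x\le x_k\wedge x_j\le x_k$, and because $x_k$ covers $x$ the intermediate element $x_k\wedge x_j$ must equal either $x$ or $x_k$.

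The case $x_k\wedge x_j=x_k$ means $x_k\le x_j$; since $x_j$ also covers $x$ and $x_k>x$, this forces $x_k=x_j$, hence $k=j\in J$, contradicting the choice $k\in I\setminus J$. Therefore $x_k\wedge x_j=x$ for every $j\in J$, and the displayed identity collapses to
\[
x_k \;=\; \bigvee_{j\in J} x \;=\; x,
\]
contradicting the fact that $x_k$ strictly exceeds $x$. This contradiction completes the proof.

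The only potentially subtle step is the appeal to distributivity for a finite join, but this is a standard induction from the two-element distributive law and is where the hypothesis that $\LL$ is distributive is actually used; the rest of the argument is purely combinatorial, exploiting that covering elements of $x$ are exactly the atoms of the interval $[x,\cdot)$.
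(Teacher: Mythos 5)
Your argument is correct, and it is genuinely different from (and cleaner than) the paper's proof. The paper invokes the cancellation characterization of distributive lattices ($x\vee y=x\vee z$ and $x\wedge y=x\wedge z$ imply $y=z$) and proceeds by a somewhat intricate induction on the size $\ell$, with several case distinctions. You instead use the meet--join distributive law directly: after picking $k$ in the symmetric difference and meeting with $x_k$, you reduce everything to the observation that $x_k\wedge x_j$ must be $x$ or $x_k$ because $x_k$ covers $x$, and both options immediately yield contradictions. This sidesteps the induction entirely and is a single short chain of implications. The key structural insight you isolate --- that covering elements of $x$ behave like atoms of the upper interval $[x,\hat 1]$, and distributivity pins down $x_k\wedge x_j$ --- is the real reason the statement holds, and your proof makes that transparent. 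The one thing worth saying explicitly, though you do gesture at it, is that after ``renaming'' you should simply pick $k$ in whichever of $I\setminus J$ or $J\setminus I$ is nonempty and replace $J$ by the other set in the subsequent computation; since the hypothesis is symmetric in $I$ and $J$, this is harmless.
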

\begin{proof}
We use the following characterisation of distributive lattices: A lattice $\LL$ is distributive if and only if $x\vee y=x\vee z$ and $x\wedge y=x\wedge z$ imply $y=z$ for $x,y,z\in\LL$ (Corollary $1$, Chapter IX of~\cite{GB}). 

We prove the statement of the proposition by induction on $\ell$. Suppose $\ell=1$. The case when $m=1$ is trivial. Without loss of generality, suppose $x_i=x_1\vee x_2$ for some $i\in\{1,\ldots, n\}$, if possible. Suppose $i=1$. Then we have $x < x_2 < x_1$, which is not possible. Similarly, $i=2$ is not possible. So, let $i\in\{3,\ldots,n\}$. In this case, we have $x < x_1 <x_i$, which is not possible. Similar argument works for $m>2$. Thus the statement is true for $\ell=1$. 

Now suppose the statement is true for $\ell=1,\ldots, k$ with $1\le k<n$. We want to prove that the statement is true for $\ell=k+1$. Without loss of generality, if possible, suppose 
\[ x_1\vee\ldots \vee x_{k+1}= x_{i_1}\vee\ldots\vee x_{i_m}\]
for some $m\ge k+1$ and $1\le i_1<\ldots<i_m\le n$. We consider the following two cases. 

First suppose $\{1,\ldots, k+1\}\cap \{i_1, \ldots, i_m\} \neq \emptyset$. Without loss of generality, let $1=i_1$. Then we have
\[x_1\vee(x_2\vee\ldots\vee x_{k+1})=x_1\vee(x_{i_2}\vee\ldots\vee x_{i_m}).\]
Also 
\[x_1\wedge(x_2\vee\ldots\vee x_{k+1})=(x_1\wedge x_2)\vee\ldots\vee (x_1\wedge x_{k+1})=x\]
and
\[x_1\wedge(x_{i_2}\vee\ldots\vee x_{i_m})=(x_1\wedge x_{i_2})\vee\ldots\vee (x_1\wedge x_{i_m})=x.\]
Therefore by the above mentioned characterisation of distributive lattices we have
\[x_2\vee\ldots\vee x_{k+1}=x_{i_2}\vee\ldots\vee x_{i_m}.\]
But this is not possible by the induction hypothesis. 

Next suppose $\{1,\ldots, k+1\}\cap \{i_1, \ldots, i_m\}=\emptyset$. Then we have
\[x_1\vee(x_2\vee\ldots\vee x_{k+1})= x_1\vee(x_2\vee\ldots\vee x_{k+1}\vee x_{i_1}\vee\ldots\vee x_{i_m}).\]
Again, since $1\notin\{2,\ldots,k+1, i_1,\ldots, i_m\}$, we have
\[x_1\wedge(x_2\vee\ldots\vee x_{k+1}\vee x_{i_1}\vee\ldots\vee x_{i_m})=(x_1\wedge x_{i_2})\vee\ldots\vee (x_1\wedge x_{i_m})=x.\]
Also
\[x_1\wedge(x_2\vee\ldots\vee x_{k+1})=x.\]
Therefore
\[x_2\vee\ldots\vee x_{k+1}=x_2\vee\ldots\vee x_{k+1}\vee x_{i_1}\vee\ldots\vee x_{i_m}.\]
which is not possible by the induction hypothesis. Thus the statement is true for $\ell=k+1$. This completes the proof.
\end{proof}

We now use Proposition ~\ref{prop:p-g}, Theorem~\ref{thm:alp_finitecase} and Proposition ~\ref{prop:distlatticeproperty} to prove Theorem~\ref{thm:cm_lattice}.
\begin{proof}[Proof of Theorem~\ref{thm:cm_lattice}]
We start with the proof of the first part. Let $f$ be any c.m.\ function on $\LL$. Fix $\alp \ge d_{max}-1$. We want to prove that $f^\alp$ is c.m. By Proposition~\ref{prop:p-g} it is enough to show that there exists a non-negative function $p$ on $\LL$ such that 
\begin{align}\label{eq:p-falp} f^\alp(x)=\sum_{y\ge x} p(y).\end{align}
For the maximum element $m$ of $\LL$ define $p(m)=f^\alp(m)$. Inductively, having defined $p(y)$ for all $y>x$, define \[p(x):=f^\alp(x)-\sum_{y>x} p(y).\]
By the definition, $p$ satisfies~\eqref{eq:p-falp}. We now show that $p$ is non-negative. For any $x\in \LL$ we have
\ba
p(x)&=f^\alp(x)-\sum_{y>x} p(y)\\
&=f^\alp(x)-\sum_{i=1}^n f^\alp(x\vee x_i) + \sum_{i\le i<j\le n}f^\alp(x\vee x_i\vee x_j) -\sum_{i\le i<j<k\le n}f^\alp(x\vee x_i\vee x_j\vee x_k) + \ldots\\
&=\Delta_{x_n}\ldots\Delta_{x_1}f^\alp(x).
\ea
where $x_1,\ldots,x_n$ are all the covering elements of $x$.
%
Now consider the Boolean lattice $\tilde{\LL}$ of the subsets of $[n]$ and the function $g$ on $\tilde{\LL}$ defined by
\[g(A):=f\left(x\vee\left(\bigvee_{i\in A}x_i\right)\right),\quad A\subset [n].\]
Then $g$ is c.m.\ Indeed, if $A, A_1,\ldots, A_k\subset [n]$, then
\ba
\Delta_{A_k}\ldots\Delta_{A_1}g(A)&=\sum_{J\subset[k]}(-1)^{|J|}g\left(A\cup\left(\bigcup_{j\in J}A_j\right)\right)\\
&= \sum_{J\subset[k]} (-1)^{|J|} f\left(x\vee x_{A}\vee \left(\bigvee_{j\in J}x_{A_j}\right)\right)\\
&=\Delta_{x_{A_k}}\ldots\Delta_{x_{A_1}}f(x\vee x_{A}) \ge 0,
\ea
where $x_B:=\bigvee_{i\in B}x_i \in\LL$ for any $B\subset [n]$.

Note that by Choquet's theorem (see Theorem $1.1.29$~\cite{molchanov2017}), up to a constant multiple, $g$ is void functional of a random subset of $[n]$. 
Since $n\le d_{max}$, we have $\alp \ge n-1$. Hence by Theorem~\ref{thm:alp_finitecase}, it follows easily that the function $g^\alp$ is c.m.\ Thus we have
\ba 
p(x)=\Delta_{x_n}\ldots\Delta_{x_1}f^\alp(x)=\Delta_{\{n\}}\ldots \Delta_{\{1\}}g^\alp(\emptyset) \ge 0.
\ea
This completes the proof of the first part.

We now prove the second part. We prove that there is a c.m.\ function $f$ on $\LL$ such that $f^\alp$ is not c.m for any non-integer $\alp<d_{max}-1$. We use the example of uniform random singleton set from Theorem ~\ref{thm:alp_finitecase} to define such a function $f$. 

Let $\XX$ be the random set defined by $\P\{\XX=\{i\}\}=1/d_{max}$ for $i=1,\dots d_{max}$ with $E=[d_{max}]$, that is, $\XX$ is the uniform singleton on $[d_{max}]$. Then the void functional $V_\XX$ of $\XX$ is a c.m.\ function on the Boolean lattice of the subsets of $[d_{max}]$. The function $V_\XX$ has the property that $V_\XX(A)$ depends only on $|A|$ and $d_{max}$. Also, by Theorem~\ref{thm:alp_finitecase} we have that for any non-integer $\alp<d_{max}-1$, the function $V_\XX^\alp$ is not c.m.\

Suppose $x\in\LL$ such that $d_x=d_{max}$. Let $x_1,\ldots,x_{d_{max}}$ be all the covering elements of $x$.
First, we define a function $g$ on the Boolean sub-lattice $\LL_x:=\lbrace  y\in\LL:y=x\vee \left( \bigvee_{j\in J} x_j \right), J\subset [d_{max}] \rbrace$ of $\LL$ such that $g$ is c.m.\ but $g^\alp$ is not c.m.\ for any non-integer $\alp<d_{max}-1$. Define $g\left(x\vee \left( \bigvee_{j\in J} x_j \right)\right):= V_{\XX}(J)$ and for any $J \subset [d_{max}]$. The function $g$ is well defined due to Proposition~\ref{prop:distlatticeproperty}. We first prove that $g$ is c.m.\ Indeed, for $y_0, y_1,\ldots, y_k\in\LL_x$ we have
\ba
\Delta_{y_k}\ldots\Delta_{y_1}g(y_0)&= \sum_{J\subset [k]} (-1)^{|J|} g\left(y_0\vee\left(\bigvee_{j\in J} y_j\right)\right)\\
&=\Delta_{A_k}\ldots\Delta_{A_1}V_\XX(A_0) \ge 0,
\ea
where $y_i=x\vee \left(\bigvee_{j\in A_i}x_j\right)$ for $i=0,1,\ldots,k$. Now fix any non-integer $\alp < d_{max}-1$. Since $V_{\XX}^\alp$ is not c.m.\, we have 
\[\Delta_{A_k}\ldots\Delta_{A_1}V_\XX^\alp(A) <0\]
for some $A, A_1, \ldots, A_k\subset [d_{max}]$. But
\[\Delta_{A_k}\ldots\Delta_{A_1}V_{\XX}^\alp(A)=\Delta_{x_{A_k}}\ldots \Delta_{x_{A_1}}g^\alp(x_A)\]
where $x_B:=x\vee\left(\bigvee_{i\in B}x_i\right) \in\LL_x$ for any $B\subset [d_{max}]$. Thus we have that $g^\alp$ is not c.m.\ for any non-integer $\alp <d_{max}-1$.

Now we extend $g$ to a function $f$ on $\LL$ such that $f$ is c.m.\ but $f^\alpha$ is not c.m.\ for any non-integer $\alp <d_{max}-1$. By Proposition~\ref{prop:p-g} there exists $p:\LL_x \to [0, \infty)$ such that $g(z)=\sum_{y\in\LL_x,\, y\ge z} p(y)$ for all $z\in\LL_x$. We extend $p$ to $\LL$ by defining it to be zero outside $\LL_x$ and define the function $f$ by 
\[f(z):=\sum_{y\ge z} p(y), \qquad z\in\LL.\]
Then by Proposition~\ref{prop:p-g} the function $f$ is c.m.\ Note that by definition $f|_{\LL_x}=g$. Since $g^\alp$ is not c.m.\ for any non-integer $\alp <d_{max}-1$, $f^\alp$ is not c.m.\ This completes the proof. 
\end{proof}

\section{Proofs of Theorem ~\ref{thm:dist_void} and ~\ref{thm:cm_approx_lattice}}\label{sec: proof of dist}
\begin{proof}[Proof of Theorem~\ref{thm:dist_void}]
First we prove the upper bound.
Fix $\XX_m\in\DD_m$. Let $\XX$ (may depend on $m$) be such that $\XX_m$ is the union of $m$ many i.i.d.\ copies of $\XX$. Let $\YY$ be the union of $N$ many i.i.d.\ copies of $\XX$, where $N\sim \mbox{Poi}(m)$. Then $\YY\in\DD_{\infty}$. For any $K\in\KK$ we have,
 \begin{align*}\label{ndiv_approx_contra}
    \left|V_{\XX_m}(K)-V_{\YY}(K)\right|&=\left|V_{\XX}^m(K)-e^{m\left(V_{\XX}(K)-1\right)}\right| \\
    &\leq \sup_{0\leq t\leq 1}\left| t^m-e^{m(t-1)}\right|.
\end{align*} 
One can check that the supremum in the above inequality occurs for $t_m\in(0,1)$ satisfying $\frac{-\log t_m}{1-t_m}=\frac{m}{m-1}$. It can be checked that $t_m=1-\frac{2}{m}+O(1/m^2)$. Hence for any $m\geq 1$,
\begin{align*}
    \sup_{0\leq t\leq 1}\left| t^m-e^{m(t-1)}\right|=t_m^{m-1}-t_m^m\leq \frac{c_1}{m}
\end{align*}
for some constant $0<c_1<\infty$. Thus for any $\XX_m\in\DD_m$ we proved that $\inf\limits_{\YY\in \DD_{\infty}} d_V(\XX_m,\YY) \le \frac{c_1}{m}$.
This completes the proof of the upper bound. Observe that $\lim\limits_{m\rightarrow \infty} m(t_m^{m-1}-t_m^m)=\frac{2}{e^2}$. Therefore $\limsup_m m\psi_m\leq 2/e^2$.

We now prove the lower bound. Since $|E|>1$, let $a,b\in E$ with $a\neq b$.  Consider the random set $\XX$ with $$\P\{\XX=\emptyset\}=1-\frac{1}{m},\quad   \P\{\XX=\{a\}\}=\frac{1}{2m},\quad   \P\{\XX=\{b\}\}=\frac{1}{2m}.$$ Let $\XX_m$ be the union of $m$ many i.i.d.\ copies of $\XX$. By definition, we have 
\begin{align*}
V_\XX(\phi)=1,\quad V_\XX(\{a\})=1-\frac{1}{2m}, \quad V_\XX(\{b\})=1-\frac{1}{2m}, \quad V_\XX(\{a,b\})=1-\frac{1}{m}\\
V_{\XX_m}(\phi)=1,\quad V_{\XX_m}(\{a\})=\left(1-\frac{1}{2m}\right)^m, \quad V_{\XX_m}(\{b\})=\left(1-\frac{1}{2m}\right)^m, \quad V_{\XX_m}(\{a,b\})=\left(1-\frac{1}{m}\right)^m.
\end{align*}

Let $C=1/(4\sqrt{e}(2+\sqrt{e}))$. If we show that $\liminf_m m\inf\limits_{\YY\in \DD_{\infty}} d_V(\XX_m,\YY)\geq C$, then the lower bound in \eqref{eq:n_div_approx} is proved. If possible, suppose it is not true. Then by going to subsequence we can get $\YY_m\in\DD_{\infty}$ such that $m d_V(\XX_m,\YY_m) \rightarrow \rho$ for some $0\leq\rho<C$, as $m\to\infty$. In that case, we must have $V_{\YY_m}(A)=V_{\XX_m}(A)+\frac{\rho_A}{m}+o(1/m)$, with $|\rho_A|\le \rho$ for all $A\in\{\phi,\{a\},\{b\},\{a,b\}\}.$ 
But for any $\YY$ to be infinitely divisible random set, we must have 
\begin{align*}
    V_\YY(\emptyset)^{\alpha}-V_\YY(\{a\})^{\alpha}-V_\YY(\{b\})^{\alpha}+V_\YY(\{a,b\})^{\alpha}\geq 0, \quad \forall \alpha\geq 0.
\end{align*}

This forces $V_\YY(\{a,b\})\geq V_\YY(\{a\}) V_\YY(\{b\})$, as the derivative of the above function at $\alpha=0$ has to be non-negative. So, for $\YY_m$ we must have
\begin{align*}
    \left(1-\frac{1}{m}\right)^m+\frac{\rho_{\{a,b\}}}{m}+o(1/m)\geq \left(\left(1-\frac{1}{2m}\right)^m+\frac{\rho_{\{a\}}}{m}+o(1/m)\right)\left(\left(1-\frac{1}{2m}\right)^m+\frac{\rho_{\{b\}}}{m}+o(1/m)\right).
\end{align*}
Since $\left(1-\frac{1}{m}\right)^{m}-\left(1-\frac{1}{2m}\right)^{2m}\geq 4/em$ for large $m$, the above implies $$\rho_{\{a,b\}}-\left(\rho_{\{a\}}+\rho_{\{b\}}\right)e^{-\frac12} -\frac1{4e} \ge 0 .$$ But
\begin{align*}
\rho_{\{a,b\}}-\left(\rho_{\{a\}}+\rho_{\{b\}}\right)e^{-\frac12} -\frac1{4e} \le \rho+2\rho e^{-\frac12}-\frac1{4e} < C\left(1+\frac2{\sqrt{e}}\right)-\frac1{4e}=0,
\end{align*}
which is a contradiction.
Hence we have proved the lower bound in \eqref{eq:n_div_approx}. From the proof it follows that $\liminf_m m\psi_m\geq C$.

\end{proof}


The proof of Theorem ~\ref{thm:cm_approx_lattice} is similar to the above proof of Theorem ~\ref{thm:dist_void}. Here we give a brief sketch of the proof. For this proof we need the following lemma, which follows from Proposition ~\ref{prop:p-g}.

\begin{lemma}\label{Corollary f-g}
Given a function $f$ which is c.m.\ on $\mathbb{S}$, a finite sub-lattice of lattice $\mathbb{L}$ (not necessarily finite), there exists a c.m.\ function $g$ on $\LL$ such that $g$ is an extension of $f$.
\end{lemma}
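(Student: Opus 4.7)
The plan is to invoke Proposition~\ref{prop:p-g} twice: first to extract a non-negative ``density'' representing $f$ on the finite sub-lattice $\mathbb{S}$, then in the converse direction to assemble a c.m.\ extension on $\mathbb{L}$.

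Since $\mathbb{S}$ is a finite lattice and $f$ is c.m.\ on it, Proposition~\ref{prop:p-g} furnishes a non-negative function $p_{\mathbb{S}}:\mathbb{S}\to [0,\infty)$ such that $f(x)=\sum_{y\in\mathbb{S},\,y\ge x} p_{\mathbb{S}}(y)$ for every $x\in\mathbb{S}$. I would extend this to $p:\mathbb{L}\to [0,\infty)$ by setting $p(y)=p_{\mathbb{S}}(y)$ for $y\in\mathbb{S}$ and $p(y)=0$ for $y\in\mathbb{L}\setminus \mathbb{S}$, so that the support of $p$ is the finite set $\mathbb{S}$. Now define
\[g(x):=\sum_{y\in\mathbb{L},\,y\ge x} p(y),\qquad x\in\mathbb{L}.\]
This is a finite sum (at most $|\mathbb{S}|$ non-zero terms), hence $g$ is a well-defined non-negative function on $\mathbb{L}$. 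For $x\in\mathbb{S}$ the terms with $y\notin\mathbb{S}$ drop out, so $g(x)=\sum_{y\in\mathbb{S},\,y\ge x}p_{\mathbb{S}}(y)=f(x)$; thus $g$ extends $f$.

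It remains to check that $g$ is c.m.\ on $\mathbb{L}$. For this I would simply replay the converse direction of the proof of Proposition~\ref{prop:p-g} verbatim: for any $y,y_1,\ldots,y_k\in\mathbb{L}$,
\[\Delta_{y_k}\cdots\Delta_{y_1}g(y)=\sum_{z\ge y}p(z)\sum_{\{J\subseteq [k]\,:\,z\ge y\vee y_J\}}(-1)^{|J|},\]
and the inner sum is non-negative for each $z$ by the same binomial-vanishing argument given there ($=1$ if $z$ dominates no $y\vee y_i$, $=0$ otherwise). Nothing in that calculation requires the ambient lattice to be finite; it only requires the outer sum over $z$ to be well-defined, which is guaranteed by the finite support of $p$.

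The main (and quite mild) obstacle is really just the bookkeeping observation that Proposition~\ref{prop:p-g}'s converse direction is a statement about an arbitrary lattice once one restricts to finitely supported $p$, so no new analytic input is needed beyond extending $p$ by zero off $\mathbb{S}$.
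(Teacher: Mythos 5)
Your proposal is correct and follows essentially the same route as the paper: obtain the non-negative density $p$ on $\mathbb{S}$ from Proposition~\ref{prop:p-g}, extend it by zero to $\mathbb{L}$, and define $g(x)=\sum_{y\ge x}p(y)$. The only (minor) difference is in the last step: the paper concludes by citing Proposition~\ref{prop:p-g} on finite sub-lattices, whereas you re-run the converse direction of that proposition's proof directly on $\mathbb{L}$, observing that the binomial-vanishing computation never uses finiteness of the ambient lattice once $p$ has finite support. Your version is in fact a little more self-contained, since invoking Proposition~\ref{prop:p-g} on a finite sub-lattice $\mathbb{K}$ strictly speaking requires a density supported inside $\mathbb{K}$, which is only immediate if $\mathbb{K}\supseteq\mathbb{S}$; replaying the argument sidesteps that bookkeeping.
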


\begin{proof}
Since $f$ is c.m.\ on $\S$, by  Proposition ~\ref{prop:p-g} there exists $p: \S\rightarrow[0,\infty)$ such that  $f(x)=\sum_{y\geq x}p(y)$. We extend $p$ to $\LL$ by defining $p(x)=0$ for $x\notin\mathbb{S}$. Define $g(x)=\sum_{y\geq x}p(y)$ for $x\in \mathbb{L}$. As $p$ is non-zero on only finitely many lattice points, the sum is well defined. Note that on any finite sub-lattice $\mathbb{K}$, the function $g$ is c.m.\ due to Proposition ~\ref{prop:p-g}. This gives that $g$ is c.m.\ on $\mathbb{L}$.
\end{proof}

\begin{proof}[Proof of Theorem~\ref{thm:cm_approx_lattice} ]
As $\mathbb{L}$ is a lattice which is not a chain, there exists a square sub-lattice $\mathbb{M}$ of four distinct elements, say, $a,b,c,d$ with $a=b\wedge c$ and $d=b\vee c$. Consider $f$ defined on $\mathbb{M}$ with $f(a)=1, f(b)= 1-\frac{1}{2m}, f(c)= 1-\frac{1}{2m}, f(d)=1-\frac{1}{m}$. It is easy to check that $f$ is c.m.\ on $\mathbb{M}$. Using Lemma~\ref{Corollary f-g}, we can extend $f$ to $g$ which is c.m.\ on $\mathbb{L}$. Note that $g^m$ is $m$-divisible. Similarly as in the lower bound proof of Theorem ~\ref{thm:dist_void}, we can prove that $\tau_m\geq c_2/m$, using $g^m$ in place of $V_{\XX_m}$. For the upper bound of $\tau_m$, we follow the upper bound proof of Theorem ~\ref{thm:dist_void}. Here we use $g=\exp(-m(1-f^{1/m}))$ to approximate any $m$-divisible function $f$ taking values in $[0,1]$. One can show that for any $C>0$, the function $\exp(-C(1-f^{1/m}))$ is c.m.\ Hence $g$ is an infinitely divisible function. Similar argument as in the upper bound proof of Theorem ~\ref{thm:dist_void} shows that $\tau_m\leq c_1/m$. 
\end{proof}

\section{Proofs of Theorem~\ref{thm:multipleintervals} and~\ref{thm:oneinterval}}\label{sec: proof of one, multiple}

In this section, we first prove Theorem~\ref{thm:oneinterval}. To prove Theorem~\ref{thm:oneinterval} we use the following lemma from~\cite{zhang1998} on Schur convexity. For $x=(x_1, \ldots, x_n)$ and $y=(y_1, \ldots, y_n)$ we say that $x$ is majorized by $y$, and write $x\prec y$, if
\ba
\sum_{i=1}^n x_{[i]} = \sum_{i=1}^n y_{[i]}
\quad \mbox{ and }\quad \sum_{i=1}^k x_{[i]} \le \sum_{i=1}^k y_{[i]}, \quad k=1,\ldots, n-1
 \ea
where $x_{[i]}$ denotes the $i$-th largest component in $x$. A function $f:A\to \R$ is Schur convex on $A\subset \R^n$ if $f(x)\le f(y)$ for each $x, y\in A$ with $x\prec y$ holds. $f$ is strictly Schur convex on $A$ if $f(x)<f(y)$ whenever $x\prec y$ and $x$ is not a permutation of $y$.
\begin{lemma}[Lemma 1.5~\cite{zhang1998}]\label{lem:schurconvexity}
Let $f$ be a symmetric function in $x_1,\ldots,x_n$ on $A\subset \R^n$ that has continuous partial derivatives. Suppose, $A$ satisfies the following
\begin{enumerate}
    \item $A$ is symmetric, i.e., if $x=(x_1,\ldots, x_n)\in A$, then $Px\in A$ for any $n\times n$ permutation matrix $P$.
    \item $A$ is convex and has a non-empty interior.
\end{enumerate}
Then $f$ is strictly Schur convex if and only if
\ba
    (x_i-x_j)\left(\frac{\partial f}{\partial x_i} - \frac{\partial f}{\partial x_j}\right) >0
\ea
on $A$ for $x_i\neq x_j$, $1\le i,j \le n$. Since $f$ is symmetric, the above condition can be reduced to
\begin{align}\label{eq:schurconvexitycondition}
    (x_1-x_2)\left(\frac{\partial f}{\partial x_1} - \frac{\partial f}{\partial x_2}\right) >0
\end{align}
for $x_1\neq x_2$.
\end{lemma}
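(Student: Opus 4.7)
The plan is to prove both directions of this Schur--Ostrowski-type criterion, with the sufficient direction being the main content. The key tool is the decomposition of majorization into a finite sequence of \emph{T-transforms} (pairwise Robin-Hood transfers), which converts the pointwise derivative condition into monotonicity of $f$ along the majorization order.

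\emph{Sufficiency} ($\Leftarrow$). Assume \eqref{eq:schurconvexitycondition} holds. Given $x, y \in A$ with $x \prec y$ and $x$ not a permutation of $y$, I use the symmetry of $f$ to assume both vectors are sorted in decreasing order. Let $j$ be the smallest index with $x_j \neq y_j$; the initial majorization inequalities $\sum_{i\le \ell} x_i \le \sum_{i \le \ell} y_i$ combined with $x_i = y_i$ for $i < j$ force $x_j < y_j$. Let $k > j$ be the smallest index with $y_k < x_k$; such $k$ exists because $\sum_i x_i = \sum_i y_i$. Set $\varepsilon := \min(y_j - x_j,\, x_k - y_k) > 0$ and $y' := y - \varepsilon e_j + \varepsilon e_k$. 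A routine check (preservation of the sum, comparison of partial sums) gives $x \prec y' \prec y$, and $y'$ agrees with $x$ in at least one more coordinate than $y$. Iterating this at most $n$ times transforms $y$ into a permutation of $x$. To show $f(y') < f(y)$, I parametrise the segment (contained in $A$ by convexity) via $\phi(s) := f(y - s e_j + s e_k)$ for $s \in [0, \varepsilon]$. Since $y_j > x_j \ge x_k > y_k$, one has $\varepsilon \le (y_j - y_k)/2$, so the $j$-th coordinate strictly exceeds the $k$-th throughout the segment. Hypothesis \eqref{eq:schurconvexitycondition} then yields $\partial_j f > \partial_k f$ along the segment, giving $\phi'(s) = -\partial_j f + \partial_k f < 0$ and hence $f(y') < f(y)$. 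Iterating and using $f$'s symmetry under permutations delivers $f(x) < f(y)$.

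\emph{Necessity} ($\Rightarrow$). Suppose $f$ is strictly Schur convex and, for contradiction, that at some $x^* \in A$ with $x^*_i > x^*_j$ one has $\partial_i f(x^*) - \partial_j f(x^*) \le 0$. For $t > 0$ sufficiently small, convexity and the non-empty interior of $A$ ensure $\gamma(t) := x^* + t(e_j - e_i) \in A$, and the inequality $\gamma(t)_i > \gamma(t)_j$ is preserved. One checks that $\gamma(t) \prec x^*$ with $\gamma(t)$ not a permutation of $x^*$, so strict Schur convexity forces $f(\gamma(t)) < f(x^*)$ for all such $t$. On the other hand, $\frac{d}{dt}\big|_{t=0} f(\gamma(t)) = \partial_j f(x^*) - \partial_i f(x^*) \ge 0$, which in the strict case $\partial_i f(x^*) < \partial_j f(x^*)$ immediately contradicts $f(\gamma(t)) < f(x^*)$ near $0$. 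The borderline case $\partial_i f(x^*) = \partial_j f(x^*)$ is handled by using continuity of the partials together with non-empty interior of $A$: either $\partial_i f - \partial_j f$ changes sign in any neighbourhood of $x^*$ (giving a nearby $x^{**}$ to which the strict-case argument applies), or it vanishes identically along a line in the direction $e_j - e_i$ through $x^*$, forcing $f$ to be constant along that line and contradicting strict Schur convexity directly.

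The main obstacle is the careful bookkeeping in the T-transform iteration for the sufficiency direction, specifically verifying that the step size $\varepsilon$ keeps the relevant coordinates strictly ordered throughout the segment so that \eqref{eq:schurconvexitycondition} actually applies pointwise. The conceptual heart is the derivative computation along the T-transform segment, which directly converts a local sign condition on $\partial_i f - \partial_j f$ into a global monotonicity statement under $\prec$. The subtle equality case in the necessity direction is the other point where some care is required, but the combination of continuity of the partials and the interior assumption on $A$ makes this tractable.
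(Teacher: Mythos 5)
The paper never proves this lemma: it is imported as Lemma 1.5 of \cite{zhang1998} and used as a black box, so there is no internal proof to compare yours against; what follows assesses your argument on its own terms. Your sufficiency direction is the standard T-transform argument and is essentially sound. Two small repairs: the segment from $y$ to $y'$ lies in $A$ not ``by convexity'' alone, but because $y'=y-\varepsilon e_j+\varepsilon e_k$ lies on the segment joining $y$ to the vector obtained from $y$ by swapping coordinates $j$ and $k$, and that swapped vector is in $A$ by the symmetry hypothesis on $A$; and when $\varepsilon=(y_j-y_k)/2$ the $j$-th and $k$-th coordinates coincide at the endpoint, so you only get $\phi'(s)<0$ on $[0,\varepsilon)$ and should conclude $f(y')<f(y)$ by continuity. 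The re-sorting bookkeeping you flag is the classical Hardy--Littlewood--P\'olya/Marshall--Olkin decomposition of majorization into T-transforms and can be carried out as you indicate.

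The necessity direction, however, contains a genuine gap, and in fact that direction of the lemma is false as stated. The gap is the dichotomy in your borderline case $\partial_i f(x^*)=\partial_j f(x^*)$: a continuous function vanishing at a point need neither change sign in every neighbourhood nor vanish along a whole line. Indeed, since strict Schur convexity implies Schur convexity, the non-strict Schur--Ostrowski criterion already forces $\partial_i f-\partial_j f\ge 0$ on $\{x_i>x_j\}$, so a sign change is impossible, and nothing forces vanishing along the line through $x^*$ in direction $e_j-e_i$; your argument simply stalls there. No fix exists: take $n=2$, $A=\R^2$, $H(p)=p^3/3-p^2+p$ (so $H'(p)=(p-1)^2\ge 0$, vanishing only at $p=1$, hence $H$ strictly increasing on $[0,\infty)$), and $f(x_1,x_2):=H\bigl((x_1-x_2)^2\bigr)$. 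This $f$ is a symmetric polynomial, and on each line $x_1+x_2=c$ majorization is monotone in $|x_1-x_2|$, so $f$ is strictly Schur convex on $\R^2$; yet $(x_1-x_2)\bigl(\partial_1 f-\partial_2 f\bigr)=4(x_1-x_2)^2H'\bigl((x_1-x_2)^2\bigr)$, which equals $0$ at the point $(1,0)$ even though $x_1\neq x_2$, violating \eqref{eq:schurconvexitycondition}. So only the sufficiency half (strict differential inequality implies strict Schur convexity) is provable, and that is the half your T-transform argument correctly delivers; it is also the only half genuinely needed downstream, since the induction in Lemma~\ref{lem:1singleton} can be phrased with the inequality \eqref{eq:schurconvexitycondition} itself as the induction hypothesis rather than strict Schur convexity.
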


First, we prove the following lemma using Lemma~\ref{lem:schurconvexity}.
\begin{lemma}\label{lem:1singleton}
For any $n\ge 2$ and any $\alp\in (n-1,n)$, consider the function
\[f_{n,\alp}(x_1,\ldots, x_n):= 1+ \sum_{k=1}^n \sum_{B\subset [n], |B|=k}\left[(-1)^{n+1-k} \left(\sum_{i\in B}x_i \right)^\alp + (-1)^k \left(1-\sum_{i\in B}x_i\right)^\alp \right]
\]
defined on 
\[A_n:=\{(x_1,\ldots, x_n): 0<x_1,\ldots,x_n<1, \sum_{i=1}^n x_i <1\}.\]
Then $f_{n,\alp}$ is strictly Schur convex on $A_n$.
\end{lemma}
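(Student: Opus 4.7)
The plan is to apply the Zhang criterion, Lemma~\ref{lem:schurconvexity}. The set $A_n$ is symmetric, convex, and open with non-empty interior, so it suffices to verify \eqref{eq:schurconvexitycondition}, i.e.\ $(x_1-x_2)(\partial_{x_1}f_{n,\alpha} - \partial_{x_2}f_{n,\alpha})>0$ whenever $x_1\neq x_2$. As a first step, I would put $f_{n,\alpha}$ into a more tractable closed form. Incorporating the $B=\emptyset$ contribution into the outer sum (it equals the additive constant $1$) and recognizing the result as an iterated finite difference of $t^\alpha$ and $(1-t)^\alpha$, the standard integral representation of iterated finite differences, combined with the identity $(-1)^{|B|}=(-1)^n(-1)^{n-|B|}$, yields
\[
f_{n,\alpha}(x) = c\int_0^{x_1}\!\cdots\!\int_0^{x_n}\bigl[(1-s)^{\alpha-n}-s^{\alpha-n}\bigr]\,du_1\cdots du_n,
\]
with $s := u_1+\cdots+u_n$ and $c := \alpha(\alpha-1)\cdots(\alpha-n+1) > 0$ (positivity uses $\alpha > n-1$).

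Differentiating under the integral sign, integrating out $u_1$ and $u_2$ explicitly, and using the antiderivative $\int[(1-v)^{\alpha-n}-v^{\alpha-n}]\,dv = -\Phi(v)/(\alpha-n+1)$, where $\Phi(v) := v^{\alpha-n+1}+(1-v)^{\alpha-n+1}$, a direct computation reduces the required sign condition to
\[
\int_0^{x_3}\!\cdots\!\int_0^{x_n}\bigl[\Phi(x_1+s')-\Phi(x_2+s')\bigr]\,du_3\cdots du_n > 0 \quad \text{for every } x_1>x_2,
\]
where $s' := u_3+\cdots+u_n$. The main obstacle is that the integrand is not pointwise signed: since $\alpha - n + 1 \in (0,1)$, the function $\Phi$ is strictly concave on $(0,1)$ and symmetric about $1/2$, hence not monotone, so $\Phi(x_1+s')<\Phi(x_2+s')$ can and does occur for some $s'$.

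To handle this I would symmetrize using the invariance of the integral under the substitution $u_i\mapsto x_i-u_i$ for $i=3,\ldots,n$, which replaces $s'$ by $s_{\mathrm{full}}-s'$ for $s_{\mathrm{full}} := x_3+\cdots+x_n$. Averaging the two equal representations gives
\[
\int\bigl[\Phi(x_1+s')-\Phi(x_2+s')\bigr]\,du = \tfrac{1}{2}\int\bigl[T(x_1,s')-T(x_2,s')\bigr]\,du,
\]
with $T(x,s') := \Phi(x+s')+\Phi(x+s_{\mathrm{full}}-s')$. Using the symmetry $\Phi(1-v)=\Phi(v)$, rewrite $T(x,s') = \Phi(x+s') + \Phi(y+s')$ with $y := 1-x-s_{\mathrm{full}}$; the associated values $y_i := 1-x_i-s_{\mathrm{full}}$ then satisfy $y_2-y_1 = x_1-x_2$, so the two intervals $[x_2+s',x_1+s']$ and $[y_1+s',y_2+s']$ have the same length $x_1-x_2$.

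The final step is to establish the pointwise inequality $T(x_1,s')>T(x_2,s')$. Writing the difference as
\[
T(x_1,s')-T(x_2,s') = \int_{x_2+s'}^{x_1+s'}\Phi'(v)\,dv - \int_{y_1+s'}^{y_2+s'}\Phi'(v)\,dv,
\]
the defining constraint $\sum_i x_i<1$ of $A_n$ yields the strictly positive shift $\delta := (y_1+s')-(x_2+s') = 1-x_1-x_2-s_{\mathrm{full}} > 0$, so the second interval is a rightward translate of the first by $\delta$. Since $\alpha-n\in(-1,0)$, $\Phi$ is strictly concave and $\Phi'$ strictly decreasing, so $\Phi'(v)>\Phi'(v+\delta)$ for every $v$, and the first integral strictly exceeds the second. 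Consequently $T(x_1,s')>T(x_2,s')$ for every admissible $s'$, the outer integration preserves strict inequality, and Lemma~\ref{lem:schurconvexity} yields the strict Schur convexity of $f_{n,\alpha}$ on $A_n$.
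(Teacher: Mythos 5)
Your proposal is correct, and it takes a genuinely different route from the paper's proof, which proceeds by induction on $n$: the paper verifies the $n=2$ case by hand, and in the inductive step reduces $\partial_{x_1}f_{n,\alpha}-\partial_{x_2}f_{n,\alpha}$ to a difference $h(x_1)-h(x_2)$ for an auxiliary function $h$ whose derivative is itself a difference of partials of $f_{n-1,\alpha-1}$, positive by the induction hypothesis (this is the content of Claim~\ref{claim:inductionstepschurconvexity}). Your argument replaces the induction with the explicit identity
\[
f_{n,\alpha}(x)=\alpha(\alpha-1)\cdots(\alpha-n+1)\int_0^{x_1}\!\cdots\!\int_0^{x_n}\bigl[(1-s)^{\alpha-n}-s^{\alpha-n}\bigr]\,du_1\cdots du_n, \qquad s=u_1+\cdots+u_n,
\]
which, I verified, follows from the iterated finite-difference formula applied to $t^\alpha$ and $(1-t)^\alpha$ (the singularities at $s=0$ are integrable since $\alpha-n>-1$, and $1-s$ stays bounded below since $\sum_i x_i<1$). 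Integrating out $u_1,u_2$ then reduces the Schur condition to the positivity of $\int[\Phi(x_1+s')-\Phi(x_2+s')]\,du_3\cdots du_n$ with $\Phi(v)=v^{\alpha-n+1}+(1-v)^{\alpha-n+1}$, and your symmetrization step ($u_i\mapsto x_i-u_i$, then $\Phi(1-v)=\Phi(v)$) turns this into a pointwise comparison of two integrals of $\Phi'$ over intervals of equal length whose right one is translated by $\delta=1-x_1-x_2-s_{\mathrm{full}}>0$; strict concavity of $\Phi$ (since $\alpha-n+1\in(0,1)$) finishes. Both arguments hinge on the same $\Phi$ (the paper's $h_{2,\alpha}$ is exactly this function when $n=2$), and both use the Zhang criterion, but your integral representation makes the positivity transparent in a single step, at the cost of a slightly more delicate justification of the iterated-difference formula near the singular corner; the paper's induction avoids that by only ever differentiating twice and pushing the rest of the complexity into the inductive hypothesis. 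Either proof is complete; yours is arguably cleaner and more self-contained.
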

\begin{proof}
We prove this lemma by induction on $n$. Since $f_{n,\alp}$ is symmetric, to prove strict Schur convexity we show that the condition~\eqref{eq:schurconvexitycondition} holds.

Consider $n=2$. Let $\alp\in(1,2)$. We have
\[f_{2,\alp}(x_1, x_2)=1+(x_1^\alp -(1-x_1)^\alp)+ (x_2^\alp -(1-x_2)^\alp) - ((x_1+x_2)^\alp -(1-x_1-x_2)^\alp)\]
defined on $A_2:=\{(x_1,x_2):0<x_1, x_2<1, x_1+x_2<1\}$. We show that for $(x_1,x_2)\in A_2$ with $x_1\neq x_2$
\begin{align}\label{eq:basecaseschurconvexity}
(x_1-x_2)\left(\frac{\partial f_{2,\alp}}{\partial x_1} - \frac{\partial f_{2,\alp}}{\partial x_2}\right)(x_1,x_2) >0.\end{align}
Then by Lemma~\ref{lem:schurconvexity}, $f_{2,\alp}$ will be strictly Schur convex. We have 
\begin{align}\label{eq:basecaseschurconvexity2}
\left(\frac{\partial f_{2,\alp}}{\partial x_1} - \frac{\partial f_{2,\alp}}{\partial x_2}\right)(x_1,x_2)&= \alp\left((x_1^{\alp-1} +(1-x_1)^{\alp-1})- (x_2^{\alp-1} +(1-x_2)^{\alp-1})\right) \nonumber\\
&=\alp \left( h_{2,\alp}(x_1) -h_{2,\alp}(x_2)\right)
\end{align}
where the function $h_{2,\alp}$ is defined by
\[h_{2,\alp}(y):=y^{\alp-1} +(1-y)^{\alp-1}, \quad 0<y<1.\]
Observe that $h_{2,\alp}$ is symmetric, i.e., $h_{2,\alp}(y)=h_{2,\alp}(1-y)$. Also, we have
\[h_{2,\alp}^{'}(y)=(\alp-1) \left(y^{\alp-2} -(1-y)^{\alp-2}\right) >0 \]
for all $y\in(0,1/2)$. Therefore $h_{2,\alp}$ is strictly increasing in $(0,1/2)$. Now to show that~\eqref{eq:basecaseschurconvexity} holds, we assume $x_1>x_2$. The argument is similar if $x_1<x_2$. Since $x_1+x_2<1$, we have $x_2 <1/2$. If $x_1 <1/2$, then $h_{2,\alp}(x_2) < h_{2,\alp}(x_1)$, as $h_{2,\alp}$ is strictly increasing in $(0,1/2)$. Again, if $x_1 \ge 1/2$, then we have $x_2<1-x_1\le 1/2$ and hence $h_{2,\alp}(x_2) < h_{2,\alp}(1-x_1)=h_{2,\alp}(x_1)$. Therefore from~\eqref{eq:basecaseschurconvexity2} we conclude that~\eqref{eq:basecaseschurconvexity} holds. Thus we proved that the statement of the lemma is true for $n=2$.

Suppose the statement is true for $n=2,3,\ldots, m$. We prove that it is true for $n=m+1$. Fix $\alp\in(m, m+1)$. Consider the function
\[f_{m+1,\alp}(x_1,\ldots, x_{m+1}):= 1+ \sum_{k=1}^{m+1} \sum_{B\subset [m+1], |B|=k}\left[(-1)^{m+2-k} \left(\sum_{i\in B}x_i \right)^\alp + (-1)^k \left(1-\sum_{i\in B}x_i\right)^\alp \right]
\]
defined on 
\[A_{m+1}:=\{(x_1,\ldots, x_{m+1}): 0<x_1,\ldots,x_{m+1}<1, \sum_{i=1}^{m+1} x_i <1\}.\]
By Lemma~\ref{lem:schurconvexity}, it is enough to prove that for any $x=(x_1,\ldots, x_{m+1})\in A_{m+1}$ with $x_1\neq x_2$
\begin{align}\label{eq:inductionstepschurconvexity}
(x_1-x_2)\left(\frac{\partial f_{m+1,\alp}}{\partial x_1} - \frac{\partial f_{m+1,\alp}}{\partial x_2}\right)(x) >0.\end{align}
Fix $x=(x_1,\ldots, x_{m+1})\in A_{m+1}$ with $x_1\neq x_2$. We have
\ba
\left(\frac{\partial f_{m+1,\alp}}{\partial x_1} - \frac{\partial f_{m+1,\alp}}{\partial x_2}\right)(x) &= \alp \sum_{k=1}^{m+1} \sum_{B\subset [m+1], |B|=k, 1\in B, 2\notin B}\left[(-1)^{m+2-k} \left(\sum_{i\in B}x_i \right)^{\alp-1} + (-1)^{k+1} \left(1-\sum_{i\in B}x_i\right)^{\alp-1} \right]\\
& \quad - \alp \sum_{k=1}^{m+1} \sum_{B\subset [m+1], |B|=k, 1\notin B, 2\in B}\left[(-1)^{m+2-k} \left(\sum_{i\in B}x_i \right)^{\alp-1} + (-1)^{k+1} \left(1-\sum_{i\in B}x_i\right)^{\alp-1} \right].
\ea
Define
\[h_{m+1,\alp, x_3,\ldots,x_{m+1}} (y):=\sum_{k=0}^{m} \sum_{B\subset \{3,\ldots, m+1\}, |B|=k}\left[(-1)^{m+1-k} \left(y+\sum_{i\in B}x_i \right)^{\alp-1} + (-1)^{k+2} \left(1-y-\sum_{i\in B}x_i\right)^{\alp-1} \right]\]
for $0<y<1-\sum_{i=3}^{m+1}x_i$. Note that by definition
\begin{align}\label{eq:inductionstepschurconvexity2}
\left(\frac{\partial f_{m+1,\alp}}{\partial x_1} - \frac{\partial f_{m+1,\alp}}{\partial x_2}\right)(x) = \alp \left[ h_{m+1,\alp, x_3,\ldots,x_{m+1}}(x_1) - h_{m+1,\alp, x_3,\ldots,x_{m+1}}(x_2)\right].
\end{align}
We now make the following claim whose proof uses the induction hypothesis and is postponed till the end of the proof.  
\begin{claim}\label{claim:inductionstepschurconvexity}
\,
\benu
\item $h_{m+1,\alp, x_3,\ldots,x_{m+1}}$ is symmetric, i.e., $h_{m+1,\alp, x_3,\ldots,x_{m+1}}(y)=h_{m+1,\alp, x_3,\ldots,x_{m+1}}(1-\sum_{i=3}^{m+1}x_i-y)$.
\item $h_{m+1,\alp, x_3,\ldots,x_{m+1}}$ is strictly increasing in $(0,\frac12(1-\sum_{i=3}^{m+1}x_i))$.
\eenu
\end{claim}
We use the above claim and complete the proof. Without loss of generality, assume $x_1>x_2$. The argument is similar if $x_1<x_2$. If $x_1 < \frac12(1-\sum_{i=3}^{m+1}x_i)$, then $h_{m+1,\alp, x_3,\ldots,x_{m+1}}(x_2) < h_{m+1,\alp, x_3,\ldots,x_{m+1}}(x_1)$. Again, if $x_1 \ge \frac12(1-\sum_{i=3}^{m+1}x_i)$, then $x_2 < 1-x_1-\sum_{i=3}^{m+1}x_i \le  \frac12(1-\sum_{i=3}^{m+1}x_i)$ and hence $h_{m+1,\alp, x_3,\ldots,x_{m+1}}(x_2) < h_{m+1,\alp, x_3,\ldots,x_{m+1}}(1-x_1-\sum_{i=3}^{m+1}x_i ) =h_{m+1,\alp, x_3,\ldots,x_{m+1}}(x_1)$. Therefore, from~\eqref{eq:inductionstepschurconvexity2} we conclude that~\eqref{eq:inductionstepschurconvexity} holds. Thus we proved that the statement of the lemma is true for $n=m+1$.

We now prove Claim~\ref{claim:inductionstepschurconvexity} using the induction hypothesis.
\begin{proof}[Proof of Claim~\ref{claim:inductionstepschurconvexity}]
\,
\benu
\item For any $B\subset \{3,\ldots,m+1\}$ with $|B|=k$, we have
\ba
(-1)^{m+1-k} \left(\left(1-\sum_{i=3}^{m+1}x_i-y\right)+\sum_{i\in B}x_i \right)^{\alp-1}&= (-1)^{m+1-k} \left(1-y-\sum_{i\in B^c}x_i\right)^{\alp-1}\\
&= (-1)^{|B^c|+2} \left(1-y-\sum_{i\in B^c}x_i\right)^{\alp-1}
\ea
and 
\ba
(-1)^{k+2} \left(1-\left(1-\sum_{i=3}^{m+1}x_i-y\right)-\sum_{i\in B}x_i \right)^{\alp-1}&= (-1)^{m+1-|B^c|} \left(y+\sum_{i\in B^c}x_i \right)^{\alp-1}.
\ea
These show that $h_{m+1,\alp, x_3,\ldots,x_{m+1}}(y)=h_{m+1,\alp, x_3,\ldots,x_{m+1}}(1-\sum_{i=3}^{m+1}x_i-y)$.
\item We show that $h_{m+1,\alp, x_3,\ldots,x_{m+1}}^{'}(y) >0$ for all $0<y<\frac12(1-\sum_{i=3}^{m+1}x_i)$. We have
\ba
h_{m+1,\alp, x_3,\ldots,x_{m+1}}^{'}(y) &= (\alp-1)\sum_{k=0}^{m} \sum_{B\subset \{3,\ldots, m+1\}, |B|=k}\left[(-1)^{m+1-k} \left(y+\sum_{i\in B}x_i \right)^{\alp-2} + (-1)^{k+3} \left(1-y-\sum_{i\in B}x_i\right)^{\alp-2} \right]\\
&= (\alp-1)\sum_{k=0}^{m} \sum_{B\subset \{3,\ldots, m\}, |B|=k}\left[(-1)^{m-k} \left(y+x_{m+1}+\sum_{i\in B}x_i \right)^{\alp-2} + (-1)^{k+4} \left(1-y-x_{m+1}-\sum_{i\in B}x_i\right)^{\alp-2} \right]\\
&\quad + (\alp-1)\sum_{k=0}^{m} \sum_{B\subset \{3,\ldots, m\}, |B|=k}\left[(-1)^{m+1-k} \left(y+\sum_{i\in B}x_i \right)^{\alp-2} + (-1)^{k+3} \left(1-y-\sum_{i\in B}x_i\right)^{\alp-2} \right].
\ea
Now consider the function
\[f_{m,\alp-1}(z_1,\ldots, z_m):= 1+ \sum_{k=1}^{m} \sum_{B\subset [m], |B|=k}\left[(-1)^{m+1-k} \left(\sum_{i\in B}z_i \right)^{\alp-1} + (-1)^k \left(1-\sum_{i\in B}z_i\right)^{\alp-1} \right]
\]
defined on 
\[A_{m}:=\{(z_1,\ldots, z_{m}): 0<z_1,\ldots,z_{m}<1, \sum_{i=1}^{m} z_i <1\}.\]
By induction hypothesis $f_{m, \alp-1}$ is strictly Schur convex on $A_m$. Therefore by Lemma~\ref{lem:schurconvexity} we have 
\[\left(\frac{\partial f_{m,\alp-1}}{\partial z_1} - \frac{\partial f_{m,\alp-1}}{\partial z_2}\right)(z) >0\]
for any $z=(z_1,\ldots, z_{m})\in A_m$ with $z_1\neq z_2$. We compute
\ba 
\left(\frac{\partial f_{m,\alp-1}}{\partial z_1} - \frac{\partial f_{m,\alp-1}}{\partial z_2}\right)(z)&=(\alp-1) \sum_{k=1}^{m} \sum_{B\subset [m], |B|=k, 1\in B, 2\notin B}\left[(-1)^{m+1-k} \left(\sum_{i\in B}z_i \right)^{\alp-2} + (-1)^{k+1} \left(1-\sum_{i\in B}z_i\right)^{\alp-2} \right]\\
& \quad - (\alp-1) \sum_{k=1}^{m} \sum_{B\subset [m], |B|=k, 1\notin B, 2\in B}\left[(-1)^{m+1-k} \left(\sum_{i\in B}z_i \right)^{\alp-2} + (-1)^{k+1} \left(1-\sum_{i\in B}z_i\right)^{\alp-2} \right]\\
&= (\alp-1) \sum_{k=0}^{m} \sum_{B\subset \{3,\ldots,m\}, |B|=k}\left[(-1)^{m-k} \left(z_1+\sum_{i\in B}z_i \right)^{\alp-2} + (-1)^{k+2} \left(1-z_1-\sum_{i\in B}z_i\right)^{\alp-2} \right]\\
& \quad - (\alp-1) \sum_{k=0}^{m} \sum_{B\subset \{3,\ldots,m\}, |B|=k}\left[(-1)^{m-k} \left(z_2+\sum_{i\in B}z_i \right)^{\alp-2} + (-1)^{k+2} \left(1-z_2-\sum_{i\in B}z_i\right)^{\alp-2} \right].
\ea
Observe that for $0<y<\frac12(1-\sum_{i=3}^{m+1}x_i)$, we have $(y+x_{m+1},y,x_3,\ldots,x_m)\in A_m$ and 
\[h_{m+1,\alp, x_3,\ldots,x_{m+1}}^{'}(y)= \left(\frac{\partial f_{m,\alp-1}}{\partial z_1} - \frac{\partial f_{m,\alp-1}}{\partial z_2}\right)(y+x_{m+1},y,x_3,\ldots,x_m)>0.\]
Thus we proved that $h_{m+1,\alp, x_3,\ldots,x_{m+1}}$ is strictly increasing in $(0, \frac12(1-\sum_{i=3}^{m+1}x_i))$.
\eenu
The proof of the lemma is now complete.
\end{proof}
\end{proof}
Using Lemma~\ref{lem:1singleton} we prove the following.
\begin{lemma}\label{lem:2singleton}
For any $n\ge 2$ and $\alp\in(n-1,n)$ consider the function $f_{n,\alp}$ (from Lemma ~\ref{lem:1singleton}) defined on $\overline{A_n}=\{(x_1,\ldots,x_n): 0\le x_1,\ldots,x_n\le 1, \sum_{i=1}^n x_i\le 1\}$. Then $f_{n,\alp}\equiv 0$ on the boundary $\partial A_n$ of $A_n$. Also, $f_{n,\alp}(x)<0$ for any $x=(x_1,\ldots,x_n)\in A_n$.
\end{lemma}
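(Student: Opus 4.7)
The plan is to split the lemma into a boundary claim and an interior strict-negativity claim, and to deduce the latter from the strict Schur convexity in Lemma~\ref{lem:1singleton}. First I will show $f_{n,\alp}\equiv 0$ on $\partial A_n$ by a direct cancellation. Then I will majorize every interior point by the extreme point $(s,0,\dots,0)$ with $s=\sum_i x_i$, obtain $f_{n,\alp}\le 0$ on $A_n$ from Schur convexity (extended to $\overline{A_n}$ by continuity), and finally upgrade ``$\le$'' to ``$<$'' by a second use of the strict Schur convexity.

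For the boundary, $\partial A_n$ decomposes into the coordinate faces $\{x_i=0\}$, $i\in[n]$, and the face $\{\sum_i x_i=1\}$. On the face $x_n=0$, I pair each $B'\subset [n-1]$ of size $j$ with $B'\cup\{n\}$ of size $j+1$: the values $\sum_{i\in B}x_i$ coincide because $x_n=0$, while the signs $(-1)^{n+1-k}$ and $(-1)^k$ in the definition of $f_{n,\alp}$ flip between $k=j$ and $k=j+1$, so every contribution cancels and $f_{n,\alp}=0$. By symmetry this handles every coordinate face. For the remaining face I introduce the slack variable $x_{n+1}:=1-\sum_{i=1}^n x_i$ and rewrite
\begin{align*}
f_{n,\alp}(x_1,\dots,x_n)=\sum_{C\subset [n+1]}(-1)^{n+1-|C|}\Bigl(\sum_{i\in C}x_i\Bigr)^\alp,
\end{align*}
which one verifies directly against the original definition by matching $B\subset[n]$ with either $C=B$ (giving the first term) or $C=([n]\setminus B)\cup\{n+1\}$ (giving the second term); the pairing $C\leftrightarrow C\triangle\{n+1\}$ then yields the same cancellation when $x_{n+1}=0$.

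For the interior, fix $x\in A_n$ and set $s=\sum_{i=1}^n x_i\in (0,1)$. Since the partial sums of the sorted vector $x$ are all bounded by $s$ and both vectors sum to $s$, we have $x\prec (s,0,\dots,0)$. By Lemma~\ref{lem:1singleton}, $f_{n,\alp}$ is strictly Schur convex on $A_n$, and by continuity of $f_{n,\alp}$ on $\overline{A_n}$ the Schur-convexity inequality extends (non-strictly) to the closure, so $f_{n,\alp}(x)\le f_{n,\alp}(s,0,\dots,0)$. The right-hand side lies on a coordinate face and hence equals $0$ by the previous paragraph, giving $f_{n,\alp}(x)\le 0$. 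To upgrade to strict inequality, suppose for contradiction $f_{n,\alp}(x_0)=0$ at some $x_0\in A_n$. All entries of $x_0$ are strictly positive, so I can perform an ``anti-Robin-Hood'' transfer $y=x_0+\epsilon(e_j-e_i)$ for indices with $(x_0)_i\le (x_0)_j$ and $\epsilon>0$ small enough that $y\in A_n$; this gives $x_0\prec y$ strictly (meaning $y$ is not a permutation of $x_0$), so Lemma~\ref{lem:1singleton} yields $f_{n,\alp}(y)>f_{n,\alp}(x_0)=0$, contradicting the weak inequality just proved. Hence $f_{n,\alp}(x)<0$ throughout $A_n$.

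The main subtlety is extending strict Schur convexity from the open set $A_n$ to a non-strict inequality on $\overline{A_n}$, which is needed to exploit the boundary extremum $(s,0,\dots,0)$; this is a standard continuity argument using that majorization is preserved under componentwise limits. Everything else reduces to pairing/cancellation bookkeeping and a routine majorization perturbation.
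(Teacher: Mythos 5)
Your proof is correct, but it takes a genuinely different route from the paper in both halves. For the boundary identity $f_{n,\alp}\equiv 0$, the paper simply checks the two cases (some $x_i=0$; $\sum x_i=1$) by directly evaluating the defining sum, whereas you homogenize $f_{n,\alp}$ with a slack variable $x_{n+1}=1-\sum_i x_i$ into the symmetric form $\sum_{C\subset[n+1]}(-1)^{n+1-|C|}(\sum_{i\in C}x_i)^{\alp}$ and then kill every boundary face by the pairing $C\leftrightarrow C\triangle\{j\}$; this is cleaner and reveals an underlying symmetry, though it does implicitly rely on reading the leading ``$1$'' as the $B=\emptyset$ term. For the interior strict negativity, the paper's argument is a one-shot contradiction: starting from an arbitrary $x\in A_n$ with $f_{n,\alp}(x)\ge 0$, it builds the explicit chain $(x_1/\ell,x_2,\dots,x_n+x_1(1-1/\ell))$, which increases in majorization order, stays inside $A_n$, and converges to a boundary point where $f_{n,\alp}=0$; strict Schur convexity forces $f_{n,\alp}$ to be strictly increasing along the chain yet converge to $0$, a contradiction. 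You instead split the argument in two: first majorize $x$ by the extreme point $(s,0,\dots,0)$ and use a continuity extension of Schur convexity to $\overline{A_n}$ to get $f_{n,\alp}(x)\le 0$, then rule out an interior zero by an anti--Robin--Hood transfer $y=x_0+\epsilon(e_j-e_i)$ (for $\epsilon>0$ small $y\in A_n$ is not a permutation of $x_0$, so $f_{n,\alp}(y)>f_{n,\alp}(x_0)=0$, contradiction). Your version is more modular but leans on the ``extend strict Schur convexity to a weak inequality on the closure'' lemma, which you correctly flag as a standard limiting argument (e.g., mix both $x$ and $y$ with the barycenter $(s/n,\dots,s/n)$ to approximate from inside $A_n$ while preserving majorization); the paper circumvents this by choosing a sequence that stays in the open set and only taking a single limit at the end.
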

\begin{proof}
First we prove that $f_{n,\alp}\equiv 0$ on $\partial A_n$. Recall that 
\[f_{n,\alp}(x_1,\ldots, x_n):= 1+ \sum_{k=1}^n \sum_{B\subset [n], |B|=k}\left[(-1)^{n+1-k} \left(\sum_{i\in B}x_i \right)^\alp + (-1)^k \left(1-\sum_{i\in B}x_i\right)^\alp \right]
\]
Let $x=(x_1,\ldots,x_n)\in \partial A_n$. First suppose that $x_i=0$ for some $i\in\{1,\ldots,n\}$. Then we have
\[ \sum_{k=1}^n \sum_{B\subset [n], |B|=k}(-1)^{n+1-k} \left(\sum_{i\in B}x_i \right)^\alp =0\]
and
\[ \sum_{k=1}^n \sum_{B\subset [n], |B|=k} (-1)^k \left(1-\sum_{i\in B}x_i\right)^\alp = -1.\]
So, $f_{n,\alp}(x)=0$. Next suppose $x_i>0$ for all $i$. Then we must have $\sum_{i=1}^n x_i=1$. In this case we have
\[\sum_{k=1}^n \sum_{B\subset [n], |B|=k}(-1)^{n+1-k} \left(\sum_{i\in B}x_i \right)^\alp = \sum_{k=1}^n \sum_{B\subset [n], |B|=k} (-1)^k \left(1-\sum_{i\in B}x_i\right)^\alp -1.\]
So, $f_{n,\alp}(x)=0$. Thus $f_{n,\alp}\equiv 0$ on $\partial A_n$.

Now fix $x=(x_1,\ldots,x_n)\in A_n$. We want to show that $f_{n,\alp}(x)<0$. Since $f_{n,\alp}$ is symmetric, without loss of generality, we can assume $x_1\le \ldots \le x_n$. If possible, suppose $f_{n,\alp}(x)\ge 0$. Observe that $(x_1/\ell, x_2, \ldots, x_{n-1}, x_n+x_1(1-1/\ell)) \prec (x_1/(\ell+1), x_2, \ldots, x_{n-1}, x_n+x_1(1-1/(\ell+1)))$ for any $\ell \ge 1$. Therefore, by Lemma~\ref{lem:1singleton} we have 
\[f_{n,\alp}\left(\frac{x_1}{\ell}, x_2, \ldots, x_{n-1}, x_n+x_1\left(1-\frac{1}{\ell}\right)\right) < f_{n,\alp}\left(\frac{x_1}{\ell+1}, x_2, \ldots, x_{n-1}, x_n+x_1\left(1-\frac{1}{\ell+1}\right)\right).\]
But this is not possible as $(x_1/\ell, x_2, \ldots, x_{n-1}, x_n+x_1(1-1/\ell)) \overset{\ell \to \infty}{\to} (0, x_2, \ldots, x_{n-1}, x_n+x_1) \in\partial A_n$. So, we must have $f_{n,\alp}(x)<0$.
\end{proof}
We now use Lemma~\ref{lem:2singleton} to prove Theorem~\ref{thm:oneinterval}.
\begin{proof}[Proof of Theorem~\ref{thm:oneinterval}]
 We know that $\XX_\alp$ exists if $\alp$ is an integer or if $\alp\ge n-1$ (see first part of  Theorem~\ref{thm:alp_finitecase}). We want to prove that if $\alp<n-1$ and $\alp$ is not an integer, then $\XX_\alp$ does not exist. Since the set $\{\alp\ge 0: \XX_\alp \text{ exists}\}$ is a semigroup under addition and $1$ belongs to the set, it is enough to prove that $\XX_\alp$ does not exists for $\alp\in(n-2, n-1)$.

Using M\"{o}bius inversion we have 
  \begin{align*}
&V_{\mathcal X_\alpha}(A^c)=\sum_{B\subset A}\mathbb P\{\mathcal X_\alpha=B\}\\
&\Rightarrow \mathbb P\{\mathcal X_\alpha=A\}= \sum_{B\subset A} (-1)^{|A|-|B|}V_{\mathcal X_\alpha}(B^c) = \sum_{B\subset A} (-1)^{|A|-|B|} \mathbb P\{\mathcal X \subset B\}^\alpha.
\end{align*}
 Thus it follows that $\XX_\alp$ exists if and only if for any $A\subset [n]$,
 \begin{align}\label{eq: existence of random set}
 q(A)= \sum_{B\subset A} (-1)^{|A|-|B|} \P\{\XX \subset B\}^\alp \ge 0.
\end{align} 

 We prove that for any $\alp\in (n-2, n-1)$
 \begin{align}\label{eq: prop 15 eq}
g_n(\alp)= \sum_{B\subset [n]} (-1)^{n-|B|} \P\{\XX \subset B\}^\alp < 0. 
 \end{align}
For $n=1$, there is nothing to prove. $\XX_\alp$ exists for all $\alp \ge 0$. Suppose $n=2$. We have 
\[g_2(\alp)= 1-p_1^\alp -p_2^\alp = 1-p_1^\alp-(1-p_1)^\alp.\]
Note that $g_2(\alp)$ is an exponential polynomial in $\alp$ with one sign change. Therefore, it has at most one zero (by Proposition $3.2$ of~\cite{jain}). Also, $g_2(0)=-1<0$ and $g_2(1)=0$. Hence, $g_2(\alp)<0$ for $\alp\in (0, 1)$.

Now let $n\ge 3$ and $\alp\in (n-2, n-1)$. We have
\ba
g_n(\alp)&= \sum_{B\subset [n]} (-1)^{n-|B|} \P\{\XX \subset B\}^\alp\\
&= \sum_{k=1}^n \sum_{B\subset [n], |B|=k} (-1)^{n-k}\P\{\XX \subset B\}^\alp\\
&= \sum_{k=1}^n \sum_{B\subset [n], n\in B, |B|=k} (-1)^{n-k}\left(p_n +\sum_{i\in B\setminus\{n\}} p_i\right)^\alp + \sum_{k=1}^n \sum_{B\subset [n], n\notin B, |B|=k} (-1)^{n-k}\left(\sum_{i\in B} p_i\right)^\alp\\
&= \sum_{k=1}^{n} \sum_{B\subset [n], n\in B, |B|=k} (-1)^{|B^c|}\left(1-\sum_{i\in B^c} p_i\right)^\alp + \sum_{k=1}^n \sum_{B\subset [n], n\notin B, |B|=k} (-1)^{n-k}\left(\sum_{i\in B} p_i\right)^\alp\\
&= 1+ \sum_{k=1}^{n-1} \sum_{B\subset [n-1],|B|=k} (-1)^{k}\left(1-\sum_{i\in B} p_i\right)^\alp +\sum_{k=1}^{n-1} \sum_{B\subset [n-1],|B|=k}(-1)^{n-k}\left(\sum_{i\in B} p_i\right)^\alp \\
&= f_{n-1,\alp} (p_1,\ldots,p_{n-1}).
\ea
But since $0<p_1,\ldots,p_{n-1}<1$ and $\sum_{i=1}^{n-1} p_i <1$, from Lemma~\ref{lem:2singleton} we have $g_n(\alp)=f_{n-1,\alp} (p_1,\ldots,p_{n-1}) <0$. This completes the proof.
\end{proof}

To prove Theorem~\ref{thm:multipleintervals} we need the following result.
\begin{proposition}\label{prop:formultipleintervals}
 Fix $n\geq 1$ and let $p_1,\ldots,p_n$ be any positive real numbers. Define
 \[q(\alpha):=\sum_{A\subset[n]}(-1)^{n-|A|}\left(\sum_{i\in A}p_i\right)^{\alpha}.\]
Then $q(\alpha)$ is zero at all positive integers $\alpha\leq n-1$ and $q(\alpha)\geq 0$ for $\alpha>n-1$.
 \end{proposition}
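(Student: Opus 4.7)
My plan is to recognize $q(\alpha)$ as an $n$-fold iterated mixed forward difference of $f(x)=x^\alpha$ evaluated at $x=0$. Indeed, for the forward-difference operator $\Delta_h f(x) = f(x+h) - f(x)$, one has the identity
\[
\Delta_{p_n}\cdots \Delta_{p_1} f(0) = \sum_{A\subset [n]} (-1)^{n-|A|} f\!\left(\sum_{i\in A} p_i\right),
\]
so $q(\alpha) = \Delta_{p_n}\cdots \Delta_{p_1} x^\alpha\big|_{x=0}$.

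From here the first statement is immediate: for an integer $1 \le k \le n-1$, $x^k$ is a polynomial of degree $k<n$, and the $n$-fold mixed forward difference of any polynomial of degree strictly less than $n$ vanishes identically, giving $q(k)=0$.

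For the second statement, I would use the classical integral representation of iterated forward differences: for a sufficiently smooth $f$ on $(0,\infty)$,
\[
\Delta_{p_n}\cdots\Delta_{p_1} f(0) = \int_0^{p_1}\!\!\cdots\int_0^{p_n} f^{(n)}(t_1+\cdots+t_n)\, dt_1\cdots dt_n,
\]
which follows by induction on $n$ starting from the fundamental theorem of calculus (the base case $n=1$ is $f(h)-f(0)=\int_0^h f'(t)\,dt$, and each induction step applies this to the inner $(n-1)$-fold difference seen as a function of a shift parameter). Applying this to $f(x)=x^\alpha$, whose derivative is $f^{(n)}(x)=\alpha(\alpha-1)\cdots(\alpha-n+1)\, x^{\alpha-n}$ for $x>0$, yields
\[
q(\alpha) = \alpha(\alpha-1)\cdots(\alpha-n+1)\int_0^{p_1}\!\!\cdots\int_0^{p_n} (t_1+\cdots+t_n)^{\alpha-n}\, dt_1\cdots dt_n.
\]
For $\alpha > n-1$ all factors $\alpha, \alpha-1, \ldots, \alpha-n+1$ are strictly positive, and the integrand is nonnegative, so $q(\alpha)\ge 0$.

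The only delicate point is the legitimacy of the integral representation near the boundary, since $f^{(n)}(x)=c\, x^{\alpha-n}$ blows up at $x=0$ when $\alpha<n$. However, the exponent $\alpha - n$ is strictly greater than $-1$ precisely because $\alpha>n-1$, which ensures that $(t_1+\cdots+t_n)^{\alpha-n}$ is integrable over $[0,p_1]\times\cdots\times[0,p_n]$, so the iterated differentiation-integration identity is valid on the open simplex and extends to the closed domain by monotone convergence (or simply by integrating from $\varepsilon$ and letting $\varepsilon\downarrow 0$). This boundary integrability is the main—though mild—technical obstacle, and it is exactly what pins the threshold at $\alpha=n-1$.
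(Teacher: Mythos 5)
Your proof is correct, and it takes a genuinely different route from the paper's. The paper derives Proposition~\ref{prop:formultipleintervals} as a corollary of Theorem~\ref{thm:oneinterval} (the singleton-random-set theorem), which is itself proved through the Schur-convexity machinery of Lemmas~\ref{lem:1singleton} and~\ref{lem:2singleton} together with the existence result Theorem~\ref{thm:alp_finitecase}: one normalizes the $p_i$, interprets $q(\alpha)$ as the top Möbius-inversion coefficient $\mathbb{P}\{\XX_\alpha = [n]\}$, and extracts the sign information from the already-established divisibility picture. Your argument instead recognizes $q(\alpha)$ as the $n$-fold mixed forward difference $\Delta_{p_n}\cdots\Delta_{p_1}\, x^\alpha\big|_{x=0}$ and appeals to the integral representation of iterated differences; this is completely self-contained, elementary, and bypasses both Schur convexity and the random-set apparatus. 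It also gives the slightly sharper conclusion $q(\alpha)>0$ for $\alpha>n-1$ for free. The trade-off is that the paper's route reuses lemmas it needs elsewhere, while your route would be a shorter path if Proposition~\ref{prop:formultipleintervals} were the primary target.

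One small correction to your closing discussion: the threshold $\alpha>n-1$ is \emph{not} what pins down the integrability of $(t_1+\cdots+t_n)^{\alpha-n}$ over the box. Substituting $u=t_1+\cdots+t_n$, the pushforward of Lebesgue measure has density of order $u^{n-1}$ near $u=0$, so the integral $\int (t_1+\cdots+t_n)^{\alpha-n}\,dt_1\cdots dt_n$ behaves like $\int_0^\varepsilon u^{\alpha-1}\,du$ near the origin, which converges for all $\alpha>0$, not just $\alpha>n-1$. The threshold $\alpha>n-1$ is forced by the sign of the coefficient $\alpha(\alpha-1)\cdots(\alpha-n+1)$: for $n-2<\alpha<n-1$ the last factor is negative while the integral is still positive and finite, so $q(\alpha)<0$ there. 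This does not affect the correctness of your proof for $\alpha>n-1$, but the line ``exactly what pins the threshold at $\alpha=n-1$'' should be attributed to the sign of the falling factorial rather than to boundary integrability.
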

\begin{proof}
The proof follows from the proof of Theorem~\ref{thm:oneinterval}. Indeed, define a random subset $\XX$ of $[n]$ such that $\P(\XX={i})=p_i/(\sum_i p_i)$. Then from ~\eqref{eq: prop 15 eq} and semigroup property of the set $\{\alpha\geq 0: \XX_{\alpha} \mbox{ exists}\}$ we conclude that $q(\alpha)<0$ for non-integer $\alpha<n-1$. Also $q(\alpha)\geq 0$ if $\alpha\in\N$ or $\alpha\geq n-1$ (since $\XX_{\alpha}$ exists in this case). Therefore $q(\alpha)=0$ for all positive integers $\alpha\leq m-1$.
 
\end{proof}
We now prove Theorem~\ref{thm:multipleintervals}. Before we proceed we recall the necessary and sufficient condition \eqref{eq: existence of random set} for existence of $\XX_{\alp}$.
 
\begin{proof}[Proof of Theorem~\ref{thm:multipleintervals}]
We proceed by induction on $k$. First consider $k=2$. Define a random set $\XX^{(2)}$ such that $Q_{2}(\{i\})=\frac{1-\epsilon}{n}$ and $Q_{2}([n])=\epsilon$ for some $\epsilon >0$ which will be chosen later. Set $p_i=\frac{1-\epsilon}{n}$ for $i=1,\dots,n$.

We first show that for any choice of $\epsilon \ge 0$, $\XX_{\alpha}^{(2)}$ does not exist if $\alpha < n-3$ and $\alpha$ is non-integer. Indeed, if $\ell <\alp<\ell+1$ with $0\le \ell \le n-4$, then by Proposition ~\ref{prop:formultipleintervals} and Proposition $3.2$ of~\cite{jain}, we have for any $B\subset [n]$ with $|B|=\ell+2$
\[r_{2,B}(\alpha)=\sum_{A\subset B}(-1)^{|B|-|A|}\left(\sum_{i\in A} p_i \right)^{\alp}<0.\] 
 We now choose small enough $\epsilon >0$ so that $\XX_{\alpha}^{(2)}$ does not exist for some $\alp\in (n-2, n-1)$. We have  
\begin{align*}
r_{2,[n]}(\alpha)&=\sum_{A\subset [n]}(-1)^{n-|A|}\mathbb{P}\{X^{(2)}\subset A\}^{\alpha} \\
&= 1 + \sum_{A\subset [n], A\neq [n]}(-1)^{n-|A|}\mathbb{P}\{X^{(2)}\subset A\}^{\alpha} \\
&=1-(p_1+p_2+\dots p_n)^{\alpha}+\sum_{A\subset [n]}(-1)^{n-|A|}\left(\sum_{i\in A}p_i \right)^{\alpha} \\
&=1-(1-\epsilon)^{\alpha}+\sum_{A\subset [n]}(-1)^{n-|A|}\left(\sum_{i\in A}p_i \right)^{\alpha}.
\end{align*}

For $\epsilon=0$, by Proposition~\ref{prop:formultipleintervals} we have $r_{2,[n]}(\alpha)=0$ for $\alpha=n-2,n-1$ and $r_{2,[n]}(\alpha)<0$ for $n-2<\alpha<n-1$. Therefore for small enough $\epsilon>0$ one can have $r_{2,[n]}(\alpha)<0$ for some $n-2<\alpha<n-1$. 

Fix an $\epsilon>0$ such that $r_{2,[n]}(\alpha)<0$ for some $n-2<\alpha<n-1$. Note that the function $\alp \mapsto \sum_{A\subset [n]}(-1)^{n-|A|}(\sum_{i\in A }p_i)^{\alpha}$ is a continuous function which is zero at positive integers $\ell\le n-2$ (by Proposition~\ref{prop:formultipleintervals}) and there exists $\eta=\eta(\epsilon)>0$ such that 
\[1-(p_1+p_2+\dots p_n)^{\alpha} = 1-(1-\epsilon)^{\alp}>\eta \]
for all $\alpha\in[1/2,n-1]$. Hence for each positive integers $\ell\le n-2$ we have $r_{2,[n]}(\alpha)>0$ in an interval around $\ell$.

Now the existence of $\XX^{(2)}_\alp$ in an interval $[n-2,n-2+\delta)$ follows from the fact that $r_{2,A}(\alpha)\geq0$ for any $A\subset [n]$ with $|A|\leq n-1$ and $\alp\geq n-2$ (see Proposition ~\ref{prop:formultipleintervals}). This completes the base case $k=2$.

Suppose the statement of the theorem is true for $k=\ell$ with $\ell\le n-3$. We want to prove that the statement is true for $k=\ell+1$. We first define $X^{(\ell+1)}$ which is obtained by perturbing $X^{(\ell)}$. Define
\begin{align*}
Q_{\ell+1}(A)= \begin{cases}Q_{\ell}(A)-\epsilon c_1 \qquad\text{ if } |A|>n-\ell+1 \text{ or } |A|=1\\
\epsilon c_2 \qquad\qquad\qquad\text{if } |A|=n-\ell+1\\
0 \qquad\qquad\qquad \quad\text{otherwise}\end{cases} 
\end{align*}
where $\epsilon>0$ is small (will be chosen later) and $c_1,c_2$ are constants depending on $n$ and $\ell$ which are chosen in such a way that the sum of the probabilities is $1$. We set $s_i=Q_{\ell}(\{i\})-\epsilon c_1$ for $i=1,\dots, n$. By induction hypothesis, $s_i$ does not depend on $i$.

We first show that for any choice of $\epsilon \ge 0$, $\XX_{\alpha}^{(\ell+1)}$ does not exist if $\alpha < n-(\ell+1)-1$ and $\alpha$ is non-integer. Suppose $m <\alp<m+1$ with $0\le m \le n-(\ell+1)-2$. Then by Proposition ~\ref{prop:formultipleintervals} and Proposition $3.2$ of~\cite{jain}, we have for any $B\subset [n]$ with $|B|=m+2$
\[r_{\ell+1,B}(\alpha)=\sum_{A\subset B}(-1)^{|B|-|A|}\left(\sum_{i\in A} s_i \right)^{\alp}<0.\] 
Hence $\XX_{\alpha}^{(\ell+1)}$ does not exist.

 We now choose small enough $\epsilon >0$ so that $\XX_{\alpha}^{(\ell+1)}$ does not exist for some $\alpha_j\in (j,j+1)$ for all $n-(\ell+1)\leq j\leq n-2$. By induction hypothesis, $\XX_{\alpha}^{(\ell)}$ does not exist for some $\alpha\in(j,j+1), \forall n-\ell\leq j\leq n-2$. Since $\XX^{(\ell+1)}$ is defined by perturbing $\XX^{(\ell)}$, one can choose small enough $\epsilon>0$ to ensure that $\XX_{\alpha}^{(\ell+1)}$ does not exist for some $\alpha\in (j,j+1), \forall n-\ell\leq j\leq n-2$. It remains to show that $\XX_{\alpha}^{(\ell+1)}$ does not exist for some $\alpha\in (n-\ell-1,n-\ell)$. Let $B\subset [n]$ with $|B|=n-\ell+1$. We have
\begin{align}\label{eq:multipleintervalproof1}
r_{\ell+1,B}(\alpha)&=\sum_{A\subset B}(-1)^{|B|-|A|}\mathbb{P}\{X^{(\ell+1)}\subset A\}^{\alpha} \nonumber\\
&= \mathbb{P}\{X^{(\ell+1)}\subset B\}^{\alpha}+ \sum_{A\subset B, A\neq B}(-1)^{|B|-|A|}\mathbb{P}\{X^{(\ell+1)}\subset A\}^{\alpha} \nonumber\\
&=\mathbb{P}\{X^{(\ell+1)}\subset B\}^{\alpha}-\left(\sum_{i\in B} s_i\right)^{\alpha}+\sum_{A\subset B}(-1)^{|B|-|A|}\left(\sum_{i\in A}s_i \right)^{\alpha} \nonumber\\
&=\left(\sum_{i\in B} s_i + c_2 \epsilon\right)^{\alpha}-\left(\sum_{i\in B} s_i\right)^{\alpha}+\sum_{A\subset B}(-1)^{|B|-|A|}\left(\sum_{i\in A}s_i \right)^{\alpha}.
\end{align}

For $\epsilon=0$, it follows from Proposition~\ref{prop:formultipleintervals} that $r_{\ell+1,B}(\alpha)=0$ for $\alpha=n-\ell-1,n-\ell$ and $r_{\ell+1,B}(\alpha)<0$ for $n-\ell-1<\alpha<n-\ell$. Therefore for small enough $\epsilon>0$ one can have $r_{\ell+1,B}(\alpha)<0$ for some $n-\ell-1<\alpha<n-\ell$. Thus for small enough $\epsilon >0$, $\XX_{\alpha}^{(\ell+1)}$ does not exist for some $\alpha_j\in (j,j+1) $ and $\forall n-(\ell+1)\leq j\leq n-2$.

Fix such a small $\epsilon>0$. We show that if $|B|\geq n-(\ell+1)+2$, then $r_{\ell+1,B}(\alpha)>0$ for $\alpha\in(j-\delta,j+\delta)$ for some small $\delta>0$ and  $\forall j\in \mathbb{N}$. If $|B|\geq n-\ell+2$, this is true for small enough $\epsilon>0$ as $\XX^{(\ell+1)}$ is defined by perturbing $\XX^{(\ell)}$ and by the induction hypothesis $r_{\ell,B}(\alpha)>0$ for $\alpha\in(j-\delta,j+\delta)$ for some small $\delta>0$ and  $\forall j\leq n-2$. Suppose $|B|=n-(\ell+1)+2$. In this case, $r_{\ell+1,B}(\alpha)$ is given by \eqref{eq:multipleintervalproof1}. Note that the function $\alp \mapsto \sum_{A\subset B}(-1)^{|B|-|A|}(\sum_{i\in A }s_i)^{\alpha}$ is a continuous function which is non-negative at positive integers $m\le n-2$ (by Proposition~\ref{prop:formultipleintervals}) and there exists $\eta>0$ such that 
\[\left(\sum_{i\in B} s_i + c_2 \epsilon\right)^{\alpha}-\left(\sum_{i\in B} s_i\right)^{\alpha}>\eta \]
for all $\alpha\in[1/2,n-1]$. Hence for each positive integers $m\le n-2$ we have $r_{\ell+1,B}(\alpha)>0$ in an interval around $m$.

Finally, we show that $\XX_{\alpha}^{(\ell +1)}$ exists when $\alpha\in [j,j+\delta)$, $\forall n-(\ell+1)\leq j\leq n-2$. First consider $n-\ell\leq j\leq n-2$. Since $\XX_{\alpha}^{(\ell)}$ exists when $\alpha\in [j,j+\delta)$ and $\XX^{(\ell+1)}$ is defined by perturbing $\XX^{(\ell)}$, we have that $\XX_{\alpha}^{(\ell +1)}$ exists for $\alpha\in [j,j+\delta_0)$ for some $\delta_0>0$. Next consider $j=n-(\ell+1)$. We already proved that if $|B|\geq n-\ell+1$, then $r_{\ell+1,B}(\alpha)>0$ for $\alpha\in[j,j+\delta)$ for some small $\delta>0$. On the other hand, if $|B|< n-\ell+1$, then using Proposition~\ref{prop:formultipleintervals} we have
\[
r_{\ell+1,B}(\alpha)=\sum_{A\subset B}(-1)^{|B|-|A|}\left(\sum_{i\in A}s_i \right)^{\alpha} \ge 0
\]
for $\alp\ge n-\ell-1$.

The proof is complete by induction.
\end{proof}

\section{Proof of Theorem~\ref{thm: TJ trick}}\label{sec: proof of TJ trick}
If $f: \N \rightarrow(0,\infty)$ is a c.m.\ sequence, then by Hausdorff's moment sequence theorem (see  Proposition $6.11$ of Chapter~$4$ of~\cite{BCRR2012harmonic}), there is a corresponding probability measure $\mu_f$ on $[0,1]$ whose moment sequence $m_f:\N \rightarrow \R$ defined by $m_{f}(k)=\int\limits_{0}^{1}x^k d\mu_{f}$, is up to scaling, equal to $f$. Note that if $X_1,X_2$ are i.i.d.\ random variables with $m_{f}$ as their moment sequence, then the random variable $Y=X_1X_2$ has moment sequence ${m_{f}^2}$. Thus $f^2$ is a c.m.\ sequence by Hausdorff's moment sequence theorem.  Proceeding similarly, one can prove that if $f$ is a c.m.\ sequence, then $f^k$ is a c.m.\ sequence, $\forall k\in \N$.

We now construct a c.m.\ sequence $f$ such that $f^{\alp}$ is not c.m.\ for $\alp\notin\N$. For any $x\in[0,1]$, let $\delta_x$ denote the Dirac measure at $x$. Fix $x\in (0,1)$ and define the probability measure $\mu= \frac{1}{2}\delta_1+\frac{1}{2}\delta_x$. Fix any $\alp\notin\N$. Let $f$ be defined as $f(k)=\int\limits_{0}^{1}y^k d\mu(y)$. Note that $ f$ is a c.m.\ sequence, by Hausdorff's moment sequence theorem. As $f$ is a sequence in $[0,1]$, $f^{\alpha}$ is also a sequence in $[0,1]$. If $f^{\alp}$ is c.m.\ then it has to be a moment sequence of some probability measure on $[0,1]$. We now use the well-known fact that, if $h$ is a moment sequence of a probability measure on $[0,\infty)$ then the infinite array $\{m_{ij}\}_{i,j\geq 0}$ given by $m_{ij
}=h(i+j-2)$ is positive semi-definite (Lemma $1.19$ of~\cite{AK}).  If we show that the array given by $m_{ij}=f^{\alpha}(i+j-2)$ is not p.s.d.\ then we have that $f^{\alp}$ is not c.m.\ sequence. Consider the matrix $[m_{ij}]_{1\leq i,j\leq n}$. By using Theorem $1.1$ of~\cite{jain} with $x_i=x^i$, we have that the matrix $[m_{ij}]_{1\leq i,j\leq n}$ is not p.s.d.\ This shows that $f^{\alp}$ is not p.s.d.\ and the proof of the first part is complete.

If $ g:(0,\infty)\rightarrow [0,\infty)$ is a c.m.\ function then by Bernstein's theorem (see Theorem $6.13$ of Chapter~$4$ of~\cite{BCRR2012harmonic}), there is a corresponding probability measure $\mu_g$ on $[0,\infty)$ whose Laplace transform defined by $\mathcal{L}_g(t)=\int\limits_{0}^{\infty}\exp(-tx) d\mu_{g}(x)$, is up to scaling, equal to $g$.  Note that if $X_1,X_2$ are i.i.d.\ random variables with $\mathcal{L}_g(t)$ as their Laplace transform, then the random variable $Y=X_1+X_2$ has Laplace transform $\mathcal{L}_g^2(t)$. Thus $g^2$ is a c.m.\ function. Proceeding similarly, we can show that if $g$ is a c.m.\ function, then $g^k$ is a c.m.\ function, $\forall k\in \N$.

We now construct a c.m.\ function $g$ such that $g^{\alp}$ is not c.m.\ for $\alp\notin\N$. Choose a probability measure on $[0,\infty)$, $\mu=\frac{1}{2}\delta_0+\frac{1}{2}\delta_y$ where $y\neq 0$. Fix any $\alp\notin\N$. Let $Y$ be a random variable with $Y\sim\mu$, and $g(t)=\E[\exp(-tY)]$. If $g^\alpha$ is a c.m.\ function, then by Bernstein theorem, $(\E[\exp(-tY)])^{\alp}=\E[\exp(-tZ)]$ for some random variable $Z$ and for all $t>0$. But $\{(\E[\exp(-kY)])^{\alp}\}_{k\geq 1}$ is the $\alp$-th power of moment sequence of the random variable $\exp(-Y)\in [0,1]$. As seen in the first part, there cannot be a random variable $\exp(-Z)$ whose moment sequence is the sequence $\{(\E[\exp(-kY)])^{\alp}\}_{k\geq 1}$. This gives a contradiction. Thus $g^{\alpha}$ is not c.m.\ function for any $\alpha\notin\N$.

\subsection*{Acknowledgement.}
The authors thank Manjunath Krishnapur for suggesting the questions addressed in this article and for several helpful discussions without which this article could not have been possible. They thank Arvind Ayyer for a helpful discussion. The second author acknowledges the support of Indian Institute of Science (C. V. Raman postdoctoral fellowship), National Board for Higher Mathematics, India (NBHM postdoctoral fellowship), and Department of Science and Technology, India (INSPIRE Faculty Fellowship, IFA22-MA176).
\bibliographystyle{abbrvnat}
\bibliography{biblio_rand_set}

\begin{thebibliography}{29}
\providecommand{\natexlab}[1]{#1}
\providecommand{\url}[1]{\texttt{#1}}
\expandafter\ifx\csname urlstyle\endcsname\relax
  \providecommand{\doi}[1]{doi: #1}\else
  \providecommand{\doi}{doi: \begingroup \urlstyle{rm}\Url}\fi

\bibitem[Arak(1981)]{arak1981}
T.~V. Arak.
\newblock On the convergence rate in {K}olmogorov’s uniform limit theorem
  {I}.
\newblock \emph{Theory of Probability \& Its Applications}, 26\penalty0
  (2):\penalty0 219--231, 1981.

\bibitem[Arak(1982)]{arak1982}
T.~V. Arak.
\newblock An improvement of the lower bound for the rate of convergence in
  {K}olmogorov’s uniform limit theorem.
\newblock \emph{Theory of Probability \& Its Applications}, 27\penalty0
  (4):\penalty0 826--832, 1982.

\bibitem[Baslingker and Dan(2023)]{BD2022}
J.~Baslingker and B.~Dan.
\newblock On {H}adamard powers of positive semi-definite matrices.
\newblock \emph{Proc. Amer. Math. Soc}, 151\penalty0 (4):\penalty0 1395--1401,
  2023.

\bibitem[Berg and Frost(1975)]{bergfrost}
C.~Berg and K.~Frost.
\newblock \emph{Potential theory on locally compact abelian groups}.
\newblock Springer, 1975.

\bibitem[Birkhoff(1948)]{GB}
G.~Birkhoff.
\newblock \emph{Lattice theory}.
\newblock American Mathematical Society, revised edition, 1948.

\bibitem[Bondesson(1992)]{bondesson1992}
L.~Bondesson.
\newblock \emph{Generalized gamma convolutions and related classes of
  distributions and densities}.
\newblock Springer, 1992.

\bibitem[Bose et~al.(2002)Bose, Dasgupta, and Rubin]{BoseDasgupta}
A.~Bose, A.~Dasgupta, and H.~Rubin.
\newblock A contemporary review and bibliography of infinitely divisible
  distributions and processes.
\newblock \emph{Sankhyā: The Indian Journal of Statistics, Series A
  (1961-2002)}, 64:\penalty0 763--819, 01 2002.
\newblock \doi{10.2307/25051430}.

\bibitem[Choquet(1954)]{choquet}
G.~Choquet.
\newblock \emph{Theory of capacities}.
\newblock Ann. Inst. Fourier, 1954.

\bibitem[Day(1970)]{day1970}
W.~A. Day.
\newblock On monotonicity of the relaxation functions of viscoelastic
  materials.
\newblock \emph{Mathematical Proceedings of the Cambridge Philosophical
  Society}, 67\penalty0 (2):\penalty0 503–508, 1970.

\bibitem[Dellacherie(1972)]{dellacherie}
C.~Dellacherie.
\newblock \emph{Capacités et processus stochastiques}.
\newblock Springer, 1972.

\bibitem[Feller(1966)]{feller1966}
W.~Feller.
\newblock \emph{An Introduction to probability theory and its applications,
  Vol. 2}.
\newblock Wiley, 1966.

\bibitem[FitzGerald and Horn(1977)]{FH77}
C.~H. FitzGerald and R.~A. Horn.
\newblock On fractional {H}adamard powers of positive definite matrices.
\newblock \emph{Journal of Mathematical Analysis and Applications}, 61\penalty0
  (3):\penalty0 633--642, 1977.

\bibitem[Frenzen(1987)]{Frenzen}
C.~L. Frenzen.
\newblock Error bounds for asymptotic expansions of the ratio of two gamma
  functions.
\newblock \emph{SIAM Journal on Mathematical Analysis}, 18\penalty0
  (3):\penalty0 890--896, 1987.

\bibitem[Grenander(1963)]{grenander2008}
U.~Grenander.
\newblock \emph{Probabilities on algebraic structures}.
\newblock Wiley, 1963.

\bibitem[Jain(2017)]{jain}
T.~Jain.
\newblock Hadamard powers of some positive matrices.
\newblock \emph{Linear Algebra and its Applications}, 528:\penalty0 147--158,
  2017.

\bibitem[Kallenberg(2017)]{kallenberg2017random}
O.~Kallenberg.
\newblock \emph{Random measures, theory and applications}, volume~1.
\newblock Springer, 2017.

\bibitem[Khare(2022)]{AK}
A.~Khare.
\newblock \emph{Matrix analysis and entrywise positivity preservers}.
\newblock Cambridge University Press, 2022.

\bibitem[Kimberling(1974)]{Kimberling}
C.~H. Kimberling.
\newblock A probabilistic interpretation of complete monotonicity.
\newblock \emph{Aequationes Math.}, 10:\penalty0 152--164, 1974.

\bibitem[Kolmogorov(1950)]{kolmogorov}
A.~N. Kolmogorov.
\newblock \emph{Foundations of the theory of probability}.
\newblock Chelsea Publishing Company, 1950.

\bibitem[Matheron(1974)]{matheron1974}
G.~Matheron.
\newblock \emph{Random sets and integral geometry}.
\newblock John Wiley \& Sons, 1974.

\bibitem[Molchanov(2017)]{molchanov2017}
I.~Molchanov.
\newblock \emph{Theory of random sets}.
\newblock Springer, second edition, 2017.

\bibitem[Serra(1982)]{serra}
J.~Serra.
\newblock \emph{Image analysis and mathematical morphology}.
\newblock Academic Press, 1982.

\bibitem[Stanley(2011)]{stanley2011enumerative}
R.~P. Stanley.
\newblock Enumerative combinatorics volume 1 second edition.
\newblock \emph{Cambridge studies in advanced mathematics}, 2011.

\bibitem[Van Den~Berg et~al.(2012)Van Den~Berg, Christensen, and
  Ressel]{BCRR2012harmonic}
C.~Van Den~Berg, J.~P.~R. Christensen, and P.~Ressel.
\newblock \emph{Harmonic analysis on semigroups: theory of positive definite
  and related functions}, volume 100.
\newblock Springer Science \& Business Media, 2012.

\bibitem[Vir\'{a}g(2003)]{virag}
B.~Vir\'{a}g.
\newblock Brownian beads.
\newblock \emph{Probability Theory and Related Fields}, 127\penalty0
  (3):\penalty0 367--387, 2003.

\bibitem[Werner(2005)]{werner}
W.~Werner.
\newblock Conformal restriction and related questions.
\newblock \emph{Probability Surveys}, 2:\penalty0 145--190, 2005.

\bibitem[Widder(1946)]{Widder}
D.~V. Widder.
\newblock \emph{The Laplace transform}.
\newblock Princeton University Press, 1946.

\bibitem[Wimp(1981)]{wimp1981}
J.~Wimp.
\newblock \emph{Sequence transformations and their applications}.
\newblock Academic Press, 1981.

\bibitem[Zhang(1998)]{zhang1998}
X.~M. Zhang.
\newblock Schur-convex functions and isoperimetric inequalities.
\newblock \emph{Proc. Amer. Math. Soc.}, 126\penalty0 (2):\penalty0 461--470,
  1998.

\end{thebibliography}





%
%
%


\end{document}